\documentclass[a4,11pt]{amsart}
\usepackage[top=3cm, bottom=3cm, left=2.5cm, right=2.5cm]{geometry}
\usepackage{amssymb,amsmath,amsthm,txfonts, amsfonts}
\usepackage{amscd}
\usepackage[all]{xy} 
\usepackage[colorlinks=true, linkcolor=blue, citecolor=blue, urlcolor=blue]{hyperref}

\usepackage{tikz}
\usetikzlibrary{arrows.meta}

\usepackage{newtxtext}
\usepackage{newtxmath}

\usepackage{graphicx}
\usepackage{xcolor}

\usepackage{mathrsfs}
\usepackage{cite}
\usepackage{nicefrac}
\usepackage{textcomp}
\usepackage{protosem}

\usepackage{lscape}
\usepackage[hang,small,bf]{caption}
\usepackage[subrefformat=parens]{subcaption}
\newtheorem{thm}{Theorem}[section]
\newtheorem{lem}{Lemma}[section]

\newtheorem{prop}{Proposition}[section]
\theoremstyle{definition}
\newtheorem{defn}{Definition}[section]
\theoremstyle{remark}
\newtheorem{rem}{\textbf{Remark}}[section]

\newtheorem{ex}{\textbf{Example}}[section]
\newtheorem{q}{\textbf{Question}}[section]

 \makeatletter
    
    \@addtoreset{equation}{section}
  \makeatother

\makeatletter
      \def\@makefnmark{%
         \leavevmode
            \raise.9ex\hbox{\check@mathfonts
                \fontsize\sf@size\z@\normalfont%
                            \@thefnmark}%
       }
      \makeatother

\makeatletter
\@namedef{subjclassname@2020}{\textup{2020} Mathematics Subject Classification}
\makeatother

\begin{document}

\title{Minus one Homogeneous Euler Flows are Geodesible}
\author{Ken Abe, Naoki Sato, and Chunjing Xie}
\date{}
\address[Ken Abe]{Department of Mathematics, Graduate School of Science, Osaka Metropolitan University, 3-3-138 Sugimoto, Sumiyoshi-ku Osaka, 558-8585, Japan}
\email{kabe@omu.ac.jp}
\address[Naoki Sato]{National Institute for Fusion Science,  322-6 Oroshi-cho Toki-city, Gifu 509-5292, Japan}
\email{sato.naoki@nifs.ac.jp}
\address[Chunjing Xie]{School of Mathematical Sciences, MOE-LSC, CMA-Shanghai, Shanghai Jiao Tong University, 200240 Shanghai, P. R. China}
\email{cjxie@sjtu.edu.cn}

\dedicatory{Dedicated to the memory of Professor Dr. Hermann Sohr}

\subjclass[2020]{35Q31, 35Q35}
\keywords{}
\date{\today}

\begin{abstract}
In this paper, we study $(-1)$-homogeneous steady solutions to the Euler equations on $\mathbb{R}^n \setminus \{0\}$. In low dimensions $n=2,3$, such flows are known to be essentially trivial. In contrast, we show that in higher dimensions $n \ge 4$, every $(-1)$-homogeneous Euler flow is a geodesible vector field with constant Bernoulli function. Moreover, any $(-1)$-homogeneous geodesible field is induced by a geodesible field on the sphere $\mathbb{S}^{n-1}$. In particular, in the case $n=4$, every $(-1)$-homogeneous Euler flow is obtained as an extension of a Beltrami field on $\mathbb{S}^{3}$. Our rigidity result contrasts with the flexibility result of Cardona (2021), which shows that on higher odd-dimensional Riemannian manifolds, a rich variety of Euler flows can be constructed from geodesible fields by exploiting the freedom in the choice of metric.
\end{abstract}

\maketitle


\tableofcontents

\section{Introduction}

\subsection{Flexibility of steady Euler flows}

In dimensions four and higher, the existence of steady solutions to the Euler equations remains largely unknown, particularly in the case of a flat metric. See \cite[II.6 \& A.II]{AK21} for a comprehensive reference. Apart from trivial extensions of lower-dimensional solutions, the only intrinsically high-dimensional examples known so far are the Hopf fields on odd-dimensional spheres. 

However, the work of Ghrist \cite{Ghrist}, together with more recent developments by Cardona \cite{C21}, reveals that the situation changes substantially when one allows flexibility in the choice of the Riemannian metric. In odd dimensions, it is possible to construct steady solutions with arbitrary topology within the class of flows with constant Bernoulli function. See Gonz\'{a}lez-Prieto et al. \cite{GPMPS25} for a recent survey.

Motivated by this perspective, we consider the steady Euler equations on an $n$-dimensional manifold $\Omega$ equipped with a Riemannian metric $g$ and associated volume form $\mu$:
\begin{equation}
\begin{aligned}
\iota_{\boldsymbol{u}} du + d\Pi &= 0, \\
d \iota_{\boldsymbol{u}} \mu &= 0.
\end{aligned}
\label{eq: E}
\end{equation}
Here, $\boldsymbol{u}$ denotes the velocity vector field and $\Pi$ the Bernoulli function. We denote by $u = g(\boldsymbol{u}, \cdot)$ the associated one-form and set the pressure by $\Pi=p+\frac{1}{2}g(\boldsymbol{u},\boldsymbol{u})$. The symbol $d$ stands for the exterior derivative, and $\iota_{\boldsymbol{u}}$ denotes contraction with the vector field $\boldsymbol{u}$. We say that $\boldsymbol{u}$ is volume-preserving if the second equation is satisfied.

There are three distinct notions associated with the vector field $\boldsymbol{u}$ in the context of the Euler equations \cite[\S 2]{C21}.

\begin{defn}[Eulerisable fields \cite{PSRT21}]
We say that a volume-preserving vector field $\boldsymbol{u}$ is \textit{Eulerisable} if there is a metric $g$ for which $\boldsymbol{u}$ satisfies the Euler equations \eqref{eq: E} for some Bernoulli function $\Pi$.
\end{defn}

\begin{defn}[Geodesible fields]
We say that a vector field $\boldsymbol{u}$ is \textit{geodesible} if there exists a metric $g$ such that its orbits are geodesics.
\end{defn}

\begin{defn}[Beltrami fields]
Let $(\Omega,g)$ be an odd-dimensional manifold of dimension $n=2m+1$. We say that $\boldsymbol{u}$ is a \textit{Beltrami field} if there exists a scalar function $\lambda$ such that $\textrm{curl}\ \boldsymbol{u} = \lambda \boldsymbol{u}$, where the vorticity field $\textrm{curl}\ \boldsymbol{u}$ is defined by
\[
\iota_{\textrm{curl}\ \boldsymbol{u}} \mu = (du)^m,
\]
with $u = g(\boldsymbol{u}, \cdot)$.
\end{defn}

These notions capture different geometric structures associated with steady Euler flows.

\begin{rem}
It is shown in \cite{PSRT21} that a non-vanishing, volume-preserving vector field $\boldsymbol{u}$ is Eulerisable if and only if there exists a one-form $u$ such that $u(\boldsymbol{u})>0$ and $\iota_{\boldsymbol{u}}u$ is exact.
\end{rem}
\begin{rem}
It is known \cite{Gluck}, \cite{C21} that a vector field $\boldsymbol{u}$ is geodesible if and only if there exists a one-form $u$ such that $u(\boldsymbol{u})>0$ and 
\begin{align}
\iota_{\boldsymbol{u}}du=0.
\end{align}
If $\boldsymbol{u}$ is geodesible, one can choose a metric $g$ so that $u=g(\boldsymbol{u},\cdot)$, e.g., \cite[Lemma 3.2]{C21}.
\end{rem}

Let $\boldsymbol{u}$ be a non-vanishing, volume-preserving vector field. Then the following are equivalent: $\boldsymbol{u}$ is Eulerisable with constant Bernoulli function, and $\boldsymbol{u}$ is geodesible. Moreover, any non-vanishing, volume-preserving geodesible vector field is a Beltrami field in odd dimensions. Indeed, if $\iota_{\boldsymbol{u}}du=0$, it follows that  
\begin{align*}
\iota_{\boldsymbol{u}\wedge \textrm{curl}\ \boldsymbol{u}}\mu=\iota_{\boldsymbol{u}}\iota_{\textrm{curl}\ \boldsymbol{u}}\mu=\iota_{\boldsymbol{u}}(du)^{m}=0.
\end{align*}
Hence $\boldsymbol{u}\wedge \textrm{curl}\ \boldsymbol{u}=0$ and $\boldsymbol{u}$ is a Beltrami field.

In dimension $n=3$, these notions coincide. However, in higher odd dimensions, they no longer agree because $\iota_{\boldsymbol{u}}(du)^{m}=0$ does not always imply $\iota_{\boldsymbol{u}}du=0$. In fact, Cardona \cite[Theorem 1.2]{C21}, \cite[Remark 3.19]{GPMPS25} showed that for $n=2m+1>3$ there exist volume-preserving Beltrami vector fields that are neither geodesible nor solutions to the Euler equations for any choice of Riemannian metric; see Figure \ref{f1}. It is known that the converse holds in higher odd dimensions under the additional condition $\lambda>0$ \cite[Proposition 3.18]{GPMPS25}.  

\begin{figure}[h]
\centering
\begin{tikzpicture}[scale=2, rotate=90]
  \draw (0.0,-0.20) circle [radius=1.36];

  \draw (-0.8,-0.8) rectangle (0.8,2.6);
  \draw (-0.8,0.9) -- (0.8,0.9);

\node[align=center] at (0,1.8) {(A) Eulerisable \\ with $d\Pi \neq 0$};
\node[align=center] at (0,0.05) {(B) Geodesible \\ (Eulerisable \\ with $d\Pi = 0$)};
  \node at (0.0,-2.2) {(C) Beltrami};
\end{tikzpicture}
\caption{Higher odd-dimensional (A) Eulerisable, (B) geodesible and (C) Beltrami fields in the non-vanishing and volume-preserving case}
\label{f1}
\end{figure}

The flexibility viewpoint described above is closely related to recent developments 
on universality phenomena in fluid dynamics \cite{Tao18}, \cite{Tao20}. In particular, it has been shown that 
steady Euler flows can exhibit remarkable universality properties, allowing one to 
embed highly complex dynamics into solutions of the Euler equations 
\cite{Car23}, \cite{GPMPS25}.

\subsection{Rigidity of homogeneous Euler flows}

Following the flexibility–rigidity terminology \cite{CDG21b}, we investigate a rigidity phenomenon for higher-dimensional steady solutions of the Euler equations. In contrast to the work \cite{C21} on the flexibility of steady Euler flows on odd-dimensional Riemannian manifolds with freedom in the choice of metric, in this work we investigate the rigidity of steady solutions to the Euler equations in higher dimensions under the flat metric on $\mathbb{R}^n \setminus \{0\}$: 
\begin{equation}
\begin{aligned}
\iota_{\boldsymbol{u}}du+d \Pi&=0,\\
\textrm{div}\ {\boldsymbol{u}}&=0.
\end{aligned}
\label{eq: ER}
\end{equation}
Here we identify one-forms and vector fields via the flat metric, and denote by $u = \boldsymbol{u}^{\flat}$ the 1-form associated with the vector field $\boldsymbol{u}$. The first equation in \eqref{eq: ER} is the differential form formulation of 
$\boldsymbol{u} \cdot \nabla \boldsymbol{u} + \nabla p = 0$, where $\Pi = p + \frac{1}{2}|\boldsymbol{u}|^2$.

The Euler equations in the whole space are invariant under the scaling transformation
\[
\boldsymbol{u}(\boldsymbol{x}) \mapsto \boldsymbol{u}_{\lambda}(\boldsymbol{x}) = \lambda^{\alpha} \boldsymbol{u}(\lambda \boldsymbol{x})
\]
for any constant $\lambda > 0$ and $\alpha \in \mathbb{R}$. In this work, we investigate the rigidity of solutions exhibiting such scaling symmetry. We say that $\boldsymbol{u}$ is ($-\alpha$)-\textit{homogeneous} if there exists $\alpha\in \mathbb{R}$ such that $\boldsymbol{u}(\boldsymbol{x})= \lambda^{\alpha} \boldsymbol{u}(\lambda \boldsymbol{x})$ for all $\lambda>0$ and $\boldsymbol{x}\in \mathbb{R}^{n}\setminus \{0\}$. We say that $(\boldsymbol{u},p)$ is a ($-\alpha$)-homogeneous solution to \eqref{eq: ER} if ($-\alpha$)-homogeneous $\boldsymbol{u}$ and $(-2\alpha)$-homogeneous $p$ satisfy \eqref{eq: ER}. 

$(-\alpha)$-homogeneous solutions to the Euler equations in two and three dimensions have been studied by Luo and Shvydkoy \cite{LuoShvydkoy}, \cite{LS17}, Shvydkoy \cite{Shv}, and the first author \cite{Abe11}. 
They have also been investigated for the inviscid Boussinesq equations in \cite{AGJ} and for the generalized SQG equations in \cite{PC}, \cite{AGSJ}.

The existence and non-existence of $(-\alpha)$-homogeneous solutions depend on the value of $\alpha$. When $\alpha = 1$, the spherical component of $\boldsymbol{u}$ admits a first integral, and consequently a strong rigidity holds. Namely, \\

\begin{itemize}
\item For $n=2$, any ($-1$)-homogeneous solutions $(\boldsymbol{u},p)\in C^{1}(\mathbb{R}^{2}\setminus \{0\})$ to \eqref{eq: ER} are the irrotational solution
\begin{align}
\boldsymbol{u}=\frac{1}{\rho}(a\boldsymbol{e}_{\theta}+f\boldsymbol{e}_{\rho}),\quad p=\frac{1}{\rho^{2}}q,   \label{2Dhom}
\end{align}
for $a,f,q\in \mathbb{R}$, where $(\rho,\theta)$ are the polar coordinates and $\boldsymbol{e}_{\theta}={}^{t}(-\sin\theta,\cos\theta)$ and $\boldsymbol{e}_{\rho}={}^{t}(\cos\theta, \sin\theta)$.\\
\item For $n=3$, there exist no ($-1$)-homogeneous solutions $(\boldsymbol{u},p)\in C^{1}(\mathbb{R}^{3}\setminus \{0\})$ to \eqref{eq: ER} \cite[Proposition 2.1]{Shv} (see also \cite[Theorem 2.4]{Abe11}).
\end{itemize}

\subsection{The statement of the main results}

In contrast to the low-dimensional case, we show that in higher dimensions $(-1)$-homogeneous solutions on $\mathbb{R}^n\setminus\{0\}$ can be constructed from geodesible fields on $\mathbb{S}^{n-1}$. Moreover, all $(-1)$-homogeneous Euler solutions are necessarily geodesible. In what follows, we shall refer to a divergence-free vector field $\boldsymbol{u}$ as geodesible if it satisfies $\iota_{\boldsymbol{u}} du = 0$, even if $\boldsymbol{u}$ has zeros. The main result of this paper is as follows:

\begin{thm}\label{t:MT}
Let $n \ge 4$. Then every $(-1)$-homogeneous solution $(\boldsymbol{u},p)\in C^{1}(\mathbb{R}^{n}\setminus \{0\})$ to \eqref{eq: ER} is geodesible with respect to the flat metric. In particular, $\Pi=0$ and
\begin{align}
\boldsymbol{u}(\boldsymbol{x})=\frac{1}{\rho}\boldsymbol{v}(\boldsymbol{y}),\quad p(\boldsymbol{y})=-\frac{1}{2\rho^{2}}|\boldsymbol{v}(\boldsymbol{y})|^{2},
\label{eq:GF}
\end{align}
where $\rho=|\boldsymbol{x}|$ and $\boldsymbol{y}=\boldsymbol{x}/|\boldsymbol{x}|\in \mathbb{S}^{n-1}$. Here $\boldsymbol{v}$ is a geodesible vector field on $\mathbb{S}^{n-1}$ satisfying
\begin{equation}
\iota_{\boldsymbol{v}} dv=0,\quad
\operatorname{div}_{S}\boldsymbol{v}=0,
\label{eq:BS}
\end{equation}
where $v=\boldsymbol{v}^{\flat}$. In particular, in the case $n=4$, $\boldsymbol{v}$ is a Beltrami field on $\mathbb{S}^{3}$, that is,
\begin{align}
\operatorname{curl}_{S}\boldsymbol{v}=\lambda \boldsymbol{v},\quad 
\operatorname{div}_{S}\boldsymbol{v}=0,
\label{eq:Beltrami}
\end{align}
where $\lambda=\lambda(\boldsymbol{y})$ denotes the proportionality factor.
\end{thm}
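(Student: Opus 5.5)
I will work in polar coordinates $\boldsymbol{x}=\rho\boldsymbol{y}$ and split $\boldsymbol{u}$ into radial and spherical parts. By $(-1)$-homogeneity there are a function $f$ and a tangent field $\boldsymbol{v}$ on $\mathbb{S}^{n-1}$ with $\boldsymbol{u}=\rho^{-1}(f\boldsymbol{y}+\boldsymbol{v})$. Since the flat metric is $d\rho^2+\rho^2 g_S$, the associated one-form is
\begin{align*}
u=f\,\frac{d\rho}{\rho}+v,\qquad v=\boldsymbol{v}^{\flat_S},
\end{align*}
and $v$ is independent of $\rho$. The Bernoulli function is $(-2)$-homogeneous, so I write $\Pi=\rho^{-2}\pi(\boldsymbol{y})$. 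Then $du=df\wedge d\rho/\rho+dv$.

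I will contract with $\boldsymbol{u}=(f/\rho)\partial_\rho+\rho^{-2}\boldsymbol{v}$ and separate the $d\rho$-component from the spherical one. This reduces \eqref{eq: ER} to a system on the sphere:
\begin{align*}
\boldsymbol{v}(f)=2\pi,\qquad \iota_{\boldsymbol{v}}dv=dh,\qquad \operatorname{div}_S\boldsymbol{v}=-(n-2)f,
\end{align*}
where $h:=\tfrac12 f^2-\pi$. The key structural fact is that $\boldsymbol{v}(h)=0$, obtained by contracting the second equation with $\boldsymbol{v}$. Hence $h$ is a first integral, and so is $G(h)$ for any smooth $G$. Everything is $C^1$ on $\mathbb{S}^{n-1}$ because $(\boldsymbol{u},p)\in C^1$, so the integrations by parts below are legitimate.

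The main step, and the one I expect to be the obstacle, is to prove $f\equiv0$ and $h\equiv0$ by integral identities localized on level sets of $h$. For any $C^1$ function $\varphi$, the divergence relation gives
\begin{align*}
\int_{\mathbb{S}^{n-1}}\boldsymbol{v}(\varphi)=-\int\varphi\operatorname{div}_S\boldsymbol{v}=(n-2)\int f\varphi.
\end{align*}
I take $\varphi=G(h)f^{2k-1}$ and use $\boldsymbol{v}(G(h))=0$ together with $\boldsymbol{v}(f)=2\pi=f^2-2h$. This yields, for every $k\ge1$ and every smooth $G$,
\begin{align*}
(n-1-2k)\int G(h)f^{2k}=-2(2k-1)\int G(h)\,h\,f^{2k-2}.
\end{align*}

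Next I choose $G\ge0$ and treat three regions separately.
\begin{itemize}
\item[(i)] Take $G$ supported in $\{h>0\}$ and $k=1$. The left side is $(n-3)\int Gf^2\ge0$, which uses $n\ge4$, while the right side is $-2\int Gh\le0$. Hence $\int Gh=0$, so $\{h>0\}$ is empty.
\item[(ii)] Take $G$ supported in $\{h<0\}$ and $2k>n-1$. The left side is $\le0$ and the right side is $\ge0$, so both vanish. Then $\int G|h|f^{2k-2}=0$, which forces $f=0$ there. Plugging back with $k=1$ gives $\int G|h|=0$, so $\{h<0\}$ is empty as well.
\item[(iii)] Thus $h\equiv0$. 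The case $k=1$ then reads $(n-3)\int f^2=0$, so $f\equiv0$.
\end{itemize}
It follows that $\pi=\tfrac12 f^2-h=0$, so $\Pi=0$ and $\boldsymbol{u}=\rho^{-1}\boldsymbol{v}$. In that case $\iota_{\boldsymbol{u}}du=0$, i.e. $\boldsymbol{u}$ is geodesible. The remaining equations become $\iota_{\boldsymbol{v}}dv=0$ and $\operatorname{div}_S\boldsymbol{v}=0$, which is \eqref{eq:BS}. The pressure is $p=\Pi-\tfrac12|\boldsymbol{u}|^2=-\tfrac{1}{2\rho^2}|\boldsymbol{v}|^2$, as in \eqref{eq:GF}.

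Finally, for $n=4$ the sphere $\mathbb{S}^3$ is odd-dimensional with $m=1$. The argument in the introduction shows that $\iota_{\boldsymbol{v}}dv=0$ implies $\boldsymbol{v}\wedge\operatorname{curl}_S\boldsymbol{v}=0$. It remains to produce $\lambda$ with $\operatorname{curl}_S\boldsymbol{v}=\lambda\boldsymbol{v}$. Where $\boldsymbol{v}\neq0$, $\lambda$ is defined by proportionality. Where $\boldsymbol{v}=0$, I need $\operatorname{curl}_S\boldsymbol{v}=0$ too, because $\boldsymbol{v}$ is allowed to have zeros. I would check this from the relation $\iota_{\boldsymbol{v}}dv=0$, since for $m=1$ it says $\boldsymbol{v}\times\operatorname{curl}_S\boldsymbol{v}=0$ pointwise. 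This gives \eqref{eq:Beltrami}.
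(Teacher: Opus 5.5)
\paragraph{Main argument.} Your argument for $f\equiv0$, $h\equiv0$ is correct, but it takes a different route from the paper.

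The paper works with $\operatorname{div}_S(\boldsymbol v f^k h^l)=(2-n+k)f^{k+1}h^l+kf^{k-1}h^{l+1}$, where its $h=2q+|\boldsymbol v|^2$ is $-2$ times yours. It localizes on the sign of $f$:
\begin{itemize}
\item Assuming $f>0$ somewhere, it integrates over a component of $\{f>\varepsilon\}$. With $k=n-2$, $l=1$ it gets $h=0$ there, and then with $l=0$ it reaches a contradiction. This gives $f\le0$.
\item It then uses $\int_{\mathbb S^{n-1}}f=0$ to get $f\equiv0$, and reads off $h\equiv0$ from $\boldsymbol v\cdot\nabla_S f=f^2+h$.
\end{itemize}
You instead localize on the sign of the first integral $h$, weighting by $G(h)$ and using even powers $f^{2k}$. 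Because $G(h)f^{2k-1}$ is a global $C^1$ test function, every integration is over the closed sphere. This avoids the boundary flux and level-set regularity issues that come with integrating over $\{f>\varepsilon\}$, where $f=\varepsilon\neq0$ on the boundary. The order of conclusions is also reversed: you get $h\equiv0$ first and then $f\equiv0$ from $(n-3)\int f^2=0$, which is exactly where $n\ge4$ enters. I checked your identity and the sign bookkeeping in (i)--(iii); they are right.

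\paragraph{Gap in the $n=4$ Beltrami step.} Your justification at zeros of $\boldsymbol v$ does not work as written. Where $\boldsymbol v=0$, the relation $\boldsymbol v\times\operatorname{curl}_S\boldsymbol v=0$ holds trivially, so it says nothing about $\operatorname{curl}_S\boldsymbol v$ there. The paper does not treat this point either; it only cites the non-vanishing argument from its introduction.

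The claim is still true, and a first-order expansion proves it:
\begin{itemize}
\item Take a zero $p$ and normal coordinates centred there. Set $A=D\boldsymbol v(p)$ and $\omega=\operatorname{curl}_S\boldsymbol v(p)$. Since $\boldsymbol v(p)=0$, $\omega$ is the Euclidean curl of the linear field $\xi\mapsto A\xi$.
\item Because $\operatorname{curl}_S\boldsymbol v$ is continuous and $\boldsymbol v(p+t\xi)=tA\xi+o(t)$, divide $\operatorname{curl}_S\boldsymbol v\times\boldsymbol v=0$ at $p+t\xi$ by $t$ and let $t\to0$. This gives $\omega\times A\xi=0$ for all $\xi$.
\item If $\omega\neq0$, then $A\xi=(a\cdot\xi)\,\omega$ for some vector $a$. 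Hence $\omega=\operatorname{curl}\bigl((a\cdot\xi)\,\omega\bigr)=a\times\omega$, which is orthogonal to $\omega$, so $\omega=0$, a contradiction.
\end{itemize}
Thus $\operatorname{curl}_S\boldsymbol v$ vanishes wherever $\boldsymbol v$ does, and $\lambda$ can be chosen arbitrarily there.
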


Theorem \ref{t:MT} provides the \textit{rigidity} result for solutions to the higher-dimensional steady Euler equations under the flat metric: all $(-1)$-homogeneous Euler flows in $\mathbb{R}^n \setminus \{0\}$ equipped with the flat metric are necessarily geodesible. This result contrasts with Cardona's \textit{flexibility} result for steady Euler flows \cite{C21}, which shows that on any odd-dimensional Riemannian manifold there exist many geodesible fields, reflecting the freedom in the choice of the metric. This contrast is summarized in Table \ref{T1}.\\

\begin{table}[h]
\begin{tabular}{|c|c|c|}
\hline
                  & Cardona \cite[Theorem 1.1]{C21}                                                                                         & This work (Theorem \ref{t:MT})                                                                      \\ \hline
Geometric setting & Riemannian manifolds                                                                             & $\mathbb{R}^{n}\setminus\{0\}$                                                             \\ \hline
Metric            & Variable                                                                                         & Fixed                                                                                        \\ \hline
Dimensions            & $n=2m+1>3$                                                                                       &  $n\geq 4$                                                                                        \\ \hline
Result            & \begin{tabular}[c]{@{}c@{}}Existence of geodesible flows \\ with arbitrary topology\end{tabular} & \begin{tabular}[c]{@{}c@{}}All $(-1)$-homogeneous Euler flows\\  are geodesible\end{tabular} \\ \hline
Nature            & Flexibility                                                                                      & Rigidity                                                                                     \\ \hline
\end{tabular}
\caption{Flexibility versus rigidity}
\label{T1}
\end{table}

From Theorem \ref{t:MT}, it follows that a geodesible vector field on $\mathbb{S}^{n-1}$ does not give rise to a $(-1)$-homogeneous Euler solution in $\mathbb{R}^{n}\setminus\{0\}$; rather, only $(-1)$-homogeneous geodesible fields arise in this setting. 
When $\alpha \neq 1$, geodesible vector fields on the sphere are subject to a strong rigidity constraint: their speed must be constant on $\mathbb{S}^{n-1}$. In particular, in odd dimensions, no such $(-\alpha)$-homogeneous Euler solutions exist. 
By contrast, in even dimensions, such $(-\alpha)$-homogeneous Euler solutions can be constructed for any $\alpha \neq 0,\ 1,\, n-1$.

\begin{thm}\label{t:MT2}
Let $n \geq 4$, and let $\alpha \neq  1,\, n-1$. 
Let $\boldsymbol{v}$ be a geodesible vector field on $\mathbb{S}^{n-1}$. 
Define
\begin{align}
\boldsymbol{u}(\boldsymbol{x}) = \frac{1}{\rho^{\alpha}}\bigl(\boldsymbol{v}(\boldsymbol{y}) + f(\boldsymbol{y})\boldsymbol{y}\bigr), 
\qquad 
p(\boldsymbol{x}) = \frac{1}{\rho^{2\alpha}} q(\boldsymbol{y}).
\label{eq:GF2}
\end{align}
Then $(\boldsymbol{u},p)$ is a $(-\alpha)$-homogeneous solution to \eqref{eq: ER} if and only if $\alpha \neq 0$ and $n$ is even and $|\boldsymbol{v}|$ is constant. In this case, $f=0$, $ q = -\frac{1}{2\alpha}|\boldsymbol{v}|^{2}$, and the Bernoulli function is given by
\begin{align}
\Pi(\boldsymbol{x}) = \frac{1}{2}\left(1 - \frac{1}{\alpha}\right)\frac{1}{\rho^{2\alpha}}|\boldsymbol{v}|^{2}.
\label{eq:BH}
\end{align}
\end{thm}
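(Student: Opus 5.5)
The plan is to write everything in polar coordinates $\boldsymbol{x}=\rho\boldsymbol{y}$, where the flat metric is $d\rho^{2}+\rho^{2}g_{S}$. Then \eqref{eq: ER} for the ansatz \eqref{eq:GF2} becomes a radial equation plus an angular equation on $\mathbb{S}^{n-1}$. Throughout, ``geodesible field on $\mathbb{S}^{n-1}$'' is understood in the paper's sense \eqref{eq:BS}: $\iota_{\boldsymbol{v}}dv=0$ and $\operatorname{div}_{S}\boldsymbol{v}=0$. I also assume $\boldsymbol{v}\not\equiv 0$; otherwise the parity condition is vacuous.

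The first step is the divergence equation. A vector $\boldsymbol{v}(\boldsymbol{y})$ tangent to the sphere of radius $\rho$ corresponds to $\rho^{-1}\boldsymbol{v}$ in the coordinate frame of $\mathbb{S}^{n-1}$. A direct computation then gives
\[
\operatorname{div}\boldsymbol{u}=\rho^{-\alpha-1}\bigl(\operatorname{div}_{S}\boldsymbol{v}+(n-1-\alpha)f\bigr).
\]
Since $\operatorname{div}_{S}\boldsymbol{v}=0$ and $\alpha\neq n-1$, this forces $f=0$.

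The second step handles the momentum equation. With $f=0$ the one-form is $u=\rho^{1-\alpha}v$, where $v=\boldsymbol{v}^{\flat}$ is taken on the unit sphere. Hence
\[
du=(1-\alpha)\rho^{-\alpha}d\rho\wedge v+\rho^{1-\alpha}dv.
\]
Contracting with the vector field $\boldsymbol{u}=\rho^{-\alpha-1}\boldsymbol{v}$ (a purely angular field) and using $\iota_{\boldsymbol{v}}d\rho=0$ and $\iota_{\boldsymbol{v}}dv=0$ gives
\[
\iota_{\boldsymbol{u}}du=-(1-\alpha)\rho^{-2\alpha-1}|\boldsymbol{v}|^{2}d\rho .
\]
The Bernoulli function is $\Pi=\rho^{-2\alpha}c(\boldsymbol{y})$ with $c=q+\tfrac12|\boldsymbol{v}|^{2}$, so
\[
d\Pi=-2\alpha\rho^{-2\alpha-1}c\,d\rho+\rho^{-2\alpha}d_{S}c.
\]
Splitting $\iota_{\boldsymbol{u}}du+d\Pi=0$ into angular and radial parts yields two conditions:
\begin{itemize}
\item $d_{S}c=0$, so $c$ is a constant;
\item $(1-\alpha)|\boldsymbol{v}|^{2}=-2\alpha c$.
\end{itemize}
Because $\alpha\neq1$, the radial identity says $|\boldsymbol{v}|^{2}$ is constant. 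If $\alpha=0$, it gives $|\boldsymbol{v}|\equiv0$, which is excluded, so $\alpha\neq0$. Then
\[
c=\tfrac12\Bigl(1-\tfrac1\alpha\Bigr)|\boldsymbol{v}|^{2},\qquad q=c-\tfrac12|\boldsymbol{v}|^{2}=-\tfrac{1}{2\alpha}|\boldsymbol{v}|^{2},
\]
which is exactly \eqref{eq:BH}. Conversely, if $\alpha\neq0$ and $|\boldsymbol{v}|$ is constant, define $f=0$ and this $q$; reversing the computation shows $(\boldsymbol{u},p)$ solves \eqref{eq: ER}.

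For the parity condition, a nonzero field with $|\boldsymbol{v}|$ constant is nowhere vanishing on $\mathbb{S}^{n-1}$. By the Poincar\'e--Hopf (hairy ball) theorem this requires $\chi(\mathbb{S}^{n-1})=0$, that is, $n-1$ odd and $n$ even. Conversely, for $n$ even the converse direction above applies to any given $\boldsymbol{v}$ of constant speed.

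The main obstacle I expect is the bookkeeping of powers of $\rho$ when passing between the ambient vector $\boldsymbol{v}(\boldsymbol{y})$, its spherical representative, and the one-forms. In particular, $u=\rho^{1-\alpha}v$ while $\boldsymbol{u}=\rho^{-\alpha-1}\boldsymbol{v}$ in the spherical frame. Both the factor $(n-1-\alpha)$ in the divergence and the factor $(1-\alpha)$ in the radial equation hinge on getting these exponents right. I would double-check them against the case $\alpha=1$ of Theorem \ref{t:MT}, where the radial equation must force $c=0$, matching $\Pi=0$ there.
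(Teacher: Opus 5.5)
Your proposal is correct and follows essentially the same route as the paper. The divergence equation with $\alpha\neq n-1$ forces $f=0$, and the angular and radial parts of the momentum equation give $dh=0$ and $(1-\alpha)|\boldsymbol{v}|^{2}+\alpha h=0$ (your $c$ is the paper's $h/2$); from these, $|\boldsymbol{v}|$ is constant, $\alpha\neq 0$, $q=-\frac{1}{2\alpha}|\boldsymbol{v}|^{2}$, and Poincar\'e--Hopf forces $n$ even. The only differences are that you recompute the reduced equations directly with $f=0$ rather than specializing the general profile system \eqref{eq:HE2g}, and that you write out the converse direction, which the paper leaves implicit.
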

Conversely, a geodesible vector field on $\mathbb{S}^{n-1}$ gives rise to a $(-\alpha)$-homogeneous geodesible field in $\mathbb{R}^{n}\setminus\{0\}$ only when $\alpha = 1$.

\begin{thm}\label{t:MT3}
Let $n \geq 4$. 
A geodesible vector field on $\mathbb{S}^{n-1}$ gives rise, via the extension \eqref{eq:GF2}, to a $(-\alpha)$-homogeneous geodesible solution of the Euler equations \eqref{eq: ER} in $\mathbb{R}^{n}\setminus \{0\}$ if and only if $\alpha = 1$. 
\end{thm}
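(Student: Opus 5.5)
We prove Theorem \ref{t:MT3} by computing $\iota_{\boldsymbol{u}}du$ directly for the ansatz \eqref{eq:GF2} and checking when it vanishes. Throughout we work in polar form $\boldsymbol{x}=\rho\boldsymbol{y}$. Since $\boldsymbol{y}\perp\boldsymbol{v}$, the one-form of $\boldsymbol{u}$ is
\begin{align*}
u=\rho^{1-\alpha}v+\rho^{-\alpha}f\,d\rho .
\end{align*}
Here $v$ is the one-form of $\boldsymbol{v}$ on $\mathbb{S}^{n-1}$, pulled back, and we use that the flat metric is $d\rho^2+\rho^2 g_S$. Consequently
\begin{align*}
du=(1-\alpha)\rho^{-\alpha}d\rho\wedge v+\rho^{1-\alpha}dv+\rho^{-\alpha}d_Sf\wedge d\rho .
\end{align*}
We contract this with $\boldsymbol{u}=\rho^{-\alpha}(\boldsymbol{v}+f\boldsymbol{y})$. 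In these coordinates the vector $\boldsymbol{v}$ on $\mathbb{R}^n$ corresponds to $\rho^{-1}$ times the spherical vector field, and $\boldsymbol{y}=\partial_\rho$. Using $\iota_{\boldsymbol{v}}dv=0$ from the geodesible hypothesis, and $v(\boldsymbol{v}_S)=|\boldsymbol{v}|^2$ with the appropriate powers of $\rho$, we separate $\iota_{\boldsymbol{u}}du$ into its $d\rho$ component and its tangential component. Up to positive powers of $\rho$, these are:
\begin{align*}
d\rho\text{-part:}\quad & -(1-\alpha)|\boldsymbol{v}|^2-\boldsymbol{v}\cdot\nabla_S f,\\
\text{tangential part:}\quad & (1-\alpha)f\,v-f\,d_Sf .
\end{align*}
Checking the precise powers of $\rho$ and the signs is routine.

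For the direction $\alpha=1\Rightarrow$ geodesible, we take $f=0$, which is the choice in Theorem \ref{t:MT}. Both parts then vanish, so $\iota_{\boldsymbol{u}}du=0$. Divergence-freeness follows from $\operatorname{div}_S\boldsymbol{v}=0$. Indeed, for $\boldsymbol{u}=\rho^{-\alpha}(\boldsymbol{v}+f\boldsymbol{y})$,
\begin{align*}
\operatorname{div}\boldsymbol{u}=\rho^{-\alpha-1}\bigl(\operatorname{div}_S\boldsymbol{v}+(n-1-\alpha)f\bigr),
\end{align*}
which vanishes when $f=0$. Then $\Pi=0$ gives $p=-\tfrac12|\boldsymbol{u}|^2$, and $(\boldsymbol{u},p)$ solves \eqref{eq: ER}. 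This also matches \eqref{eq:GF}.

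For the converse, suppose the extension is a geodesible solution, so $\operatorname{div}\boldsymbol{u}=0$ and $\iota_{\boldsymbol{u}}du=0$. Since $\alpha=n-1$ is not excluded in this theorem, the divergence formula does not by itself force $f=0$. We handle this with the tangential equation, which reads $f\bigl((1-\alpha)v-d_Sf\bigr)=0$. On the open set $\{f\neq0\}$ it gives $d_Sf=(1-\alpha)v$. Substituting into the radial equation yields
\begin{align*}
-(1-\alpha)|\boldsymbol{v}|^2-(1-\alpha)|\boldsymbol{v}|^2=0,
\end{align*}
so $(1-\alpha)|\boldsymbol{v}|^2=0$ on that set. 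If $\alpha\neq1$, then $\boldsymbol{v}=0$ on $\{f\ne0\}$, hence $d_Sf=0$ there, so $f$ is locally constant on $\{f\neq0\}$. By continuity $f$ is constant on each component of $\mathbb{S}^{n-1}$, and $\mathbb{S}^{n-1}$ is connected. We then split into two cases. If $f\equiv c\neq0$, then $\boldsymbol{v}\equiv0$ and $\operatorname{div}\boldsymbol{u}=0$ forces $(n-1-\alpha)c=0$, so $\alpha=n-1$. This is the trivial radial field $\boldsymbol{y}/\rho^{n-1}$, which does not arise from $\boldsymbol{v}$ and is excluded as degenerate. If $f\equiv0$, the radial equation gives $(1-\alpha)|\boldsymbol{v}|^2=0$, so $\boldsymbol{v}\equiv0$ unless $\alpha=1$. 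Hence a nontrivial $\boldsymbol{v}$ yields a geodesible solution only if $\alpha=1$.

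The main obstacle is the bookkeeping in computing $du$ and its contraction in polar coordinates. One must correctly relate the Euclidean field $\boldsymbol{v}$ tangent to spheres to the intrinsic field on $\mathbb{S}^{n-1}$, keeping track of the factors of $\rho$ that produce the coefficient $(1-\alpha)$. Once that is settled, the case analysis on $\{f\neq0\}$ above is elementary. Alternatively, one can invoke Theorem \ref{t:MT2}: for $\alpha\neq1$ it forces $f=0$ and $\Pi\neq0$ when $\boldsymbol{v}\neq0$, and geodesible fields have constant Bernoulli function, giving a contradiction.
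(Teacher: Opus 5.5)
There is a genuine gap, caused by a sign error in your $d\rho$-component. Contract $du=\rho^{1-\alpha}d_Sv+\rho^{-\alpha}d\rho\wedge A$, with $A=(1-\alpha)v-d_Sf$, against $\boldsymbol{u}=\rho^{-\alpha-1}\tilde{\boldsymbol{v}}+\rho^{-\alpha}f\partial_\rho$, using $\iota_{\boldsymbol v}dv=0$. This gives $\iota_{\boldsymbol u}du=\rho^{-2\alpha}fA-\rho^{-2\alpha-1}(\iota_{\tilde{\boldsymbol v}}A)\,d\rho$. So the $d\rho$-part is $-(1-\alpha)|\boldsymbol v|^2+\boldsymbol v\cdot\nabla_Sf$, not $-(1-\alpha)|\boldsymbol v|^2-\boldsymbol v\cdot\nabla_Sf$. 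As a check, take $\boldsymbol u=\nabla(\rho^{1-\alpha}Y)$: then $du=0$, $f=(1-\alpha)Y$ and $\boldsymbol v=\nabla_SY$, yet your expression gives $-2(1-\alpha)|\nabla_SY|^2\neq0$. With the correct sign, substituting $d_Sf=(1-\alpha)v$ from the tangential equation into the radial one yields $0=0$. So the claim $(1-\alpha)|\boldsymbol v|^2=0$ on $\{f\neq0\}$ is unsupported, and your converse collapses with it.

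No pointwise argument can replace this step. On $\{f\neq0\}$ your two equations just say $A=0$, i.e.\ the field is locally irrotational. For $\alpha=n-1$, take $Y$ nonvanishing and $\Delta_S$-harmonic on an open subset of the sphere, and set $f=(2-n)Y$, $\boldsymbol v=\nabla_SY$. Then $\iota_{\boldsymbol v}dv=0$, $\operatorname{div}_S\boldsymbol v=0$, $\operatorname{div}\boldsymbol u=0$ and $\iota_{\boldsymbol u}du=0$ all hold there, with $\boldsymbol v\neq0$.

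The missing ingredient is global: $\operatorname{div}_S\boldsymbol v=0$ on the compact sphere, which your converse never uses. The paper observes that a geodesible field has $\alpha\Pi=0$, i.e.\ $\alpha(h+f^2)=0$. Hence the second equation of \eqref{eq:HE2} becomes $\boldsymbol v\cdot\nabla_Sf=(1-\alpha)|\boldsymbol v|^2$, which is exactly your corrected radial equation. Integrating over $\mathbb S^{n-1}$, the left side vanishes because $\operatorname{div}_S\boldsymbol v=0$, leaving $(1-\alpha)\int_{\mathbb S^{n-1}}|\boldsymbol v|^2=0$. This gives $\boldsymbol v=0$ for every $\alpha\neq1$, including $\alpha=0$ and $\alpha=n-1$, with no tangential equation or case split.

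If you want to keep your structure: $\operatorname{div}_S\boldsymbol v=0$ together with $\operatorname{div}\boldsymbol u=0$ gives $(n-1-\alpha)f=0$. So for $\alpha\neq n-1$ you get $f\equiv0$, and the radial equation kills $\boldsymbol v$ pointwise. Only $\alpha=n-1$ genuinely needs the integration.

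Your fallback via Theorem~\ref{t:MT2} does not close the gap either, since that theorem assumes $\alpha\neq n-1$. The direction $\alpha=1\Rightarrow$ geodesible is fine as written.
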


\subsection{Implications of the main results}

Theorem~\ref{t:MT} establishes a rigidity theorem for $(-1)$-homogeneous Euler flows in $\mathbb{R}^n \setminus \{0\}$, showing that every such flow arises from a geodesible vector field on $\mathbb{S}^{n-1}$. On the other hand, Theorems~\ref{t:MT2} and \ref{t:MT3} provide necessary and sufficient conditions on the homogeneity exponent $\alpha$ under which a geodesible vector field on $\mathbb{S}^{n-1}$ can be extended to a $(-\alpha)$-homogeneous Euler flow or a $(-\alpha)$-geodesible vector field on $\mathbb{R}^n \setminus \{0\}$.

For the flat Euclidean metric, the existence and topological structure of genuinely higher-dimensional Euler flows on $\mathbb{R}^n$ remain largely unexplored. Theorems~\ref{t:MT}--\ref{t:MT3} shed light on this problem under the assumption of homogeneity. More specifically, their contributions can be highlighted through the following three aspects.\\

\begin{itemize}
\item \textbf{Rigidity}. We establish the first rigidity theorem for higher-dimensional Euler flows under the assumption of $(-1)$-homogeneity. More precisely, Theorem~\ref{t:MT} establishes a one-to-one correspondence between $(-1)$-homogeneous Euler flows in $\mathbb{R}^n \setminus \{0\}$ and geodesible vector fields on $\mathbb{S}^{n-1}$.\\

\item \textbf{Existence}. Theorem \ref{t:MT} also asserts that, in the flat Euclidean space $\mathbb{R}^n \setminus \{0\}$, including even-dimensional cases, solutions to the Euler equations can be constructed from geodesible vector fields on $\mathbb{S}^{n-1}$. In particular, when $n=4$, the abundance of Beltrami fields on $\mathbb{S}^{3}$ yields a large family of ($-1$)-homogeneous Euler flows on $\mathbb{R}^{4}\setminus \{0\}$. This shows that, even within the class arising from geodesible fields on $\mathbb{S}^{3}$
, one obtains a remarkably rich collection of four-dimensional Euler flows.\\

\item \textbf{Degree of homogeneity}. We give necessary and sufficient conditions for a geodesible vector field on $\mathbb{S}^{n-1}$ to extend to a $(-\alpha)$-homogeneous Euler flow or a $(-\alpha)$-geodesible vector field on $\mathbb{R}^n \setminus \{0\}$ (Theorems~\ref{t:MT2} and \ref{t:MT3}). As a consequence, we obtain a characterization of when a geodesible vector field on the sphere extends to a homogeneous Euler flow or a geodesible vector field in Euclidean space.
\end{itemize}

\subsection{Ideas of the proof}

The key ingredient in the proofs of Theorems \ref{t:MT}-\ref{t:MT3} is the system on a sphere $\mathbb{S}^{n-1}$. Following the case $n=3$ \cite{Shv}, \cite{Abe11}, we denote ($-\alpha$)-homogeneous solutions to \eqref{eq: ER} by 
\begin{align}
\boldsymbol{u}(\boldsymbol{x})
=
\frac{1}{\rho^{\alpha}}
\bigl(\boldsymbol{v}(\boldsymbol{y}) + f(\boldsymbol{y}) \boldsymbol{y}\bigr),
\qquad
p(\boldsymbol{x})
=
\frac{1}{\rho^{2\alpha}} q(\boldsymbol{y}),
\end{align}
for $\rho=|\boldsymbol{x}|$ and $\boldsymbol{y}=\boldsymbol{x}/|\boldsymbol{x}|$, and deduce a system for $(\boldsymbol{v},f,q)$ on $\mathbb{S}^{n-1}$: 
\begin{equation} 
\begin{aligned}
(1-\alpha) f\boldsymbol{v}+ \iota_{\boldsymbol{v}}d \boldsymbol{v}+\frac{1}{2}d h&=0,\\
\boldsymbol{v}\cdot \nabla_{S}f-|\boldsymbol{v}|^{2}-\alpha |f|^{2}-2\alpha q&=0,\\
\textrm{div}_S\ \boldsymbol{v}+(n-1-\alpha)f&=0,
\end{aligned}
\label{eq:HE0}
\end{equation}
for $h=2q+|\boldsymbol{v}|^{2}$. Here, the subscript $S$ indicates that the corresponding gradient and divergence are taken on $\mathbb{S}^{n-1}$. 

\subsubsection{Key ideas of the proof of Theorem \ref{t:MT}}
For $\alpha=1$, the first equation is nothing but the momentum equation of the Euler equations on $\mathbb{S}^{n-1}$ with the Bernoulli function $h/2$. Taking contraction with $\boldsymbol{v}$,
\begin{align}
\boldsymbol{v}\cdot \nabla_S h=0.   \label{eq:1stint}
\end{align}
Observe that the tangential vector field $\boldsymbol{v}$ is geodesible (i.e., $\iota_{\boldsymbol{v}}d\boldsymbol{v}=0$ and $\textrm{div}_{S}\ \boldsymbol{v}=0$) if $h=f=0$. Combining \eqref{eq:1stint} with the last two equations of \eqref{eq:HE0} for $\alpha=1$ (i.e., $\boldsymbol{v}\cdot \nabla_S f=f^{2}+h$ and $\textrm{div}_S\ \boldsymbol{v}+(n-2)f=0$), we deduce the key identity 
\begin{align}
\textrm{div}_S\ (\boldsymbol{v} f^{k}h^{l})
=(2-n+k)f^{k+1}h^{l}+kf^{k-1}h^{l+1},  \label{eq:Identity0}
\end{align}
for non-negative integers $k,l\in \mathbb{N}\cup \{0\}$. In odd dimensions, Theorem~\ref{t:MT} follows directly from the identity \eqref{eq:Identity0}. Indeed, for $k=n-2$, the first term on the right-hand side vanishes, and the remaining term is given by
$(n-2)f^{\,n-3}h^{\,l+1}$. Integrating \eqref{eq:Identity0} over $\mathbb{S}^{n-1}$ with $l=1$ yields $fh=0$. Applying \eqref{eq:Identity0} once more with $k=l=1$ yields $h=0$. Finally, integrating \eqref{eq:Identity0} for $k\neq n-2$ and $l=0$ implies $f=0$. 

In even dimensions, we first show that $f\le 0$ on $\mathbb S^{n-1}$. The non-positivity of $f$, together with the second and third equations of \eqref{eq:HE0}, implies $f=0$ and $h=0$. We argue by contradiction, supposing that $\Omega=\{f>0\}\neq \emptyset$. By integrating \eqref{eq:Identity0} over $\Omega$ and arguing as in the odd-dimensional case, we deduce a contradiction $h=f=0$ on $\Omega=\{f>0\}$.

\subsubsection{Key ideas of the proof of Theorem \ref{t:MT2}}

The proofs of Theorems~\ref{t:MT2} and \ref{t:MT3}
are based on analyzing system \eqref{eq:HE0}
under the assumption that $v$ is geodesible. Namely, 
\begin{equation} 
\begin{aligned}
(1-\alpha) f\boldsymbol{v}+\frac{1}{2}d h&=0,\\
\boldsymbol{v}\cdot \nabla_{S}f-(1-\alpha)|\boldsymbol{v}|^{2}-\alpha |f|^{2}-\alpha h&=0,\\
(n-1-\alpha)f&=0.
\end{aligned}
\label{eq:HE0b}
\end{equation}
For $\alpha\neq n-1$ and $1$, $f=0$, $dh=0$ and $|\boldsymbol{v}|$ is constant. If $n$ is odd, then $\mathbb{S}^{n-1}$ is even-dimensional, and the Poincar\'{e}--Hopf theorem implies that a nonvanishing vector field of constant magnitude cannot exist. For even dimensions, $\alpha\neq 0$ for nonzero $|\boldsymbol{v}|$ and $q=-\frac{1}{2\alpha}|\boldsymbol{v}|^{2}$.

\subsubsection{Key ideas of the proof of Theorem \ref{t:MT3}}

If $\boldsymbol{u}$ is geodesible on $\mathbb{R}^{n}\setminus \{0\}$, $\alpha \Pi=0$. Namely, $\alpha (h+f^{2})=0$. Integrating the second equation of \eqref{eq:HE0b} and using $\textrm{div}_S\ v=0$, we conclude that $\alpha=1$.

\subsection{Organization of the paper}

This paper is organized as follows. In \S \ref{S2}, we derive system~\eqref{eq:HE0} for $(-\alpha)$-homogeneous solutions and establish a rigidity theorem for irrotational solutions satisfying $du=0$. In \S \ref{S3}, we prove the rigidity theorem for $(-1)$-homogeneous solutions (Theorem~\ref{t:MT}), and also establish rigidity in the cases $n=2$ and $n=3$. In \S \ref{S4}, assuming that the tangential vector field $\boldsymbol{v}$ is geodesible, we use~\eqref{eq:HE0b} to characterize the values of $\alpha$ for which $\boldsymbol{u}$ is a $(-\alpha)$-homogeneous Euler solution or a $(-\alpha)$-homogeneous geodesible solution, thereby proving Theorems~\ref{t:MT2} and~\ref{t:MT3}. In \S \ref{S5}, we prove the nonexistence of geodesible $(-\alpha)$-homogeneous solutions for $1<\alpha\le n-1$. Finally, in \S \ref{S6}, we discuss several open questions concerning the topology and rigidity of higher-dimensional Euler flows.

\subsection{Acknowledgements}
The first author's research on the Navier–Stokes equations has been greatly influenced by the book by H. Sohr \cite{Sohr}, to which he is deeply indebted. This work was initiated during his visit to Shanghai Jiao Tong University in August 2025, whose warm hospitality is gratefully acknowledged. KA was supported by the JSPS through the Grant in Aid for Scientific Research (C) 24K06800, MEXT Promotion of Distinctive Joint Research Center Program JPMXP0723833165, and Osaka Metropolitan University Strategic Research Promotion Project (Development of International Research Hubs). NS was partially supported by JSPS KAKENHI Grant 
No. 25K07267, No. 22H04936, and No. 24K00615. The research of Xie was  partially supported by  NSFC grants 12571238 and 12426203. Part of the work was done when Xie was visiting Osaka Metropolitan University in November 2025 and he would like to thank the support and hospitality of Osaka Metropolitan University. 

\section{$(-\alpha)$-homogeneous solutions}\label{S2}

We derive the equations on the sphere $\mathbb{S}^{n-1}$ satisfied by $(-\alpha)$-homogeneous solutions to the Euler equations on $\mathbb{R}^n \setminus \{0\}$, and show that, in the irrotational case, such $(-\alpha)$-homogeneous solutions exist only for discrete values of $\alpha$. In the case of the Navier--Stokes equations, when $\alpha=1$, an analogous system can be obtained by computing the tangential and normal components of the Laplacian term.

\subsection{The equations on the sphere}
We use polar coordinates \(\rho = |\boldsymbol{x}| > 0\) and \(\boldsymbol{y} = \frac{\boldsymbol{x}}{|\boldsymbol{x}|} = \boldsymbol{e}_{\rho} \in \mathbb{S}^{n-1}\) for \(\boldsymbol{x} \in \mathbb{R}^n\). A \((-\alpha)\)-homogeneous vector field \(\boldsymbol{u}\) and a \(( -2\alpha)\)-homogeneous scalar function \(p\) on \(\mathbb{R}^n \setminus \{0\}\) are written as
\begin{align}
\boldsymbol{u}(\boldsymbol{x})
=
\frac{1}{\rho^{\alpha}}
\bigl(\boldsymbol{v}(\boldsymbol{y}) + f(\boldsymbol{y}) \boldsymbol{y}\bigr),
\qquad
p
=
\frac{1}{\rho^{2\alpha}} q(\boldsymbol{y}),
\label{eq:HV}
\end{align}
where \(\boldsymbol{v}\) is a tangential vector field on \(\mathbb{S}^{n-1}\). Let \(v = \boldsymbol{v}^{\flat}_{\mathbb{S}^{n-1}}\) denote the dual one-form of \(\boldsymbol{v}\) with respect to the standard metric on \(\mathbb{S}^{n-1}\). Then, using the relation
\[
\boldsymbol{v}^{\flat}_{\mathbb{R}^n} = \rho\, v,
\]
the one-form \(u = \boldsymbol{u}^{\flat}_{\mathbb{R}^n}\) can be written as
\begin{align}
u
=
\frac{1}{\rho^{\alpha}}
\bigl(\rho\, v(\boldsymbol{y}) + f(\boldsymbol{y})\, d\rho\bigr)
=
\frac{1}{\rho^{\alpha-1}} v(\boldsymbol{y}) + \frac{1}{\rho^{\alpha}} f(\boldsymbol{y})\, d\rho.  \label{eq:HU}
\end{align}
Here, we identify \(\boldsymbol{y}\) with \(\partial_{\rho}\). Moreover, we introduce the rescaled vector field \(\tilde{\boldsymbol{v}} = \rho \boldsymbol{v}\), so that
\begin{align}
\boldsymbol{v} = \frac{1}{\rho} \tilde{\boldsymbol{v}},
\qquad
\tilde{\boldsymbol{v}}^{\flat}_{\mathbb{S}^{n-1}} = v.
\label{eq:N}
\end{align}

We express the exterior derivative and the gradient in \(\mathbb{R}^n\) as
\begin{align*}
d = d\rho\, \partial_{\rho} + d_{S},
\qquad
\nabla = \boldsymbol{y}\, \partial_{\rho} + \frac{1}{\rho}\, \nabla_{S},
\end{align*}
where \(d_S\) and \(\nabla_S\) denote the exterior derivative and the gradient on \(\mathbb{S}^{n-1}\), respectively.

\begin{ex}
In polar coordinates \((\rho,\theta,\phi)\), we write
\begin{align*}
d_{S} = d\theta\, \partial_{\theta} + d\phi\, \partial_{\phi}, 
\qquad
\nabla_{S} = \boldsymbol{e}_{\theta}\, \partial_{\theta} + \boldsymbol{e}_{\phi}\, \frac{1}{\sin\theta}\, \partial_{\phi},
\end{align*}
where
\[
\boldsymbol{e}_{\theta} ={}^{t} (\cos\phi \cos\theta, \sin\phi \cos\theta, -\sin\theta), 
\qquad
\boldsymbol{e}_{\phi} ={}^{t} (-\sin\phi, \cos\phi, 0).
\]
The orthonormal basis \(\{\boldsymbol{e}_{\rho}, \boldsymbol{e}_{\theta}, \boldsymbol{e}_{\phi}\}\) is identified with
\begin{align*}
\boldsymbol{e}_{\rho} = \partial_{\rho}, 
\qquad
\boldsymbol{e}_{\theta} = \frac{1}{\rho}\, \partial_{\theta}, 
\qquad
\boldsymbol{e}_{\phi} = \frac{1}{\rho \sin\theta}\, \partial_{\phi}.
\end{align*}
Their dual one-forms in \(\mathbb{R}^3\) are given by
\begin{align*}
\boldsymbol{e}_{\rho}^{\flat} = d\rho, 
\qquad
\boldsymbol{e}_{\theta}^{\flat} = \rho\, d\theta, 
\qquad
\boldsymbol{e}_{\phi}^{\flat} = \rho \sin\theta\, d\phi.
\end{align*}
Accordingly, a tangential vector field \(\boldsymbol{v}\) on \(\mathbb{S}^2\) can be written as
\begin{align*}
\boldsymbol{v}
=
v^{\theta}(\theta,\phi)\, \boldsymbol{e}_{\theta}
+
v^{\phi}(\theta,\phi)\, \boldsymbol{e}_{\phi}
=
v^{\theta}\frac{1}{\rho}\partial_{\theta}
+
v^{\phi}\frac{1}{\rho\sin\theta}\partial_{\phi}
=
\frac{1}{\rho}\tilde{\boldsymbol{v}},
\end{align*}
where
\[
\tilde{\boldsymbol{v}}
=
v^{\theta}\partial_{\theta}
+
v^{\phi}\frac{1}{\sin\theta}\partial_{\phi}.
\]
Hence, the associated one-form on \(\mathbb{S}^2\) is given by
\begin{align*}
v
=
\tilde{\boldsymbol{v}}^{\flat}_{\mathbb{S}^2}
=
v^{\theta} d\theta + v^{\phi} \sin\theta\, d\phi.
\end{align*}
\end{ex}

 We first compute the divergence of \(\boldsymbol{u}\):
\begin{align*}
\operatorname{div}\boldsymbol{u}
=
\frac{1}{\rho^{\alpha+1}}
\Bigl(\operatorname{div}_{S} \boldsymbol{v} +(n-1-\alpha)f\Bigr).
\end{align*}
We next compute the vorticity two-form. From \eqref{eq:HU}, we obtain
\begin{align}
d u
=
\frac{1}{\rho^{\alpha-1}} d_{S}v
+
\frac{1}{\rho^{\alpha}} d\rho \wedge A,
\qquad
A=(1-\alpha) v- d_{S} f.
\label{eq:V}
\end{align}
We now compute \(\iota_{\boldsymbol{u}}du\). Using \eqref{eq:HV}, we decompose
\begin{align*}
\iota_{\boldsymbol{u}}du
=
\frac{1}{\rho^{\alpha}}
\Bigl(
\iota_{\boldsymbol{v}}du
+
f\,\iota_{\partial_{\rho}}du
\Bigr).
\end{align*}
Since \(d_S v\) is purely tangential, we have \(\iota_{\partial_\rho} d_S v = 0\), and hence
\begin{align*}
\iota_{\boldsymbol{v}}du
=
\frac{1}{\rho^{\alpha-1}} \iota_{\boldsymbol{v}} d_S v
+
\frac{1}{\rho^{\alpha}} \iota_{\boldsymbol{v}}(d\rho \wedge A) 
=
\frac{1}{\rho^{\alpha-1}} \iota_{\boldsymbol{v}} d_S v
-
\frac{1}{\rho^{\alpha}} (\iota_{\boldsymbol{v}} A)\, d\rho,
\end{align*}
where \(\iota_{\boldsymbol{v}} d\rho = 0\) has been used. Similarly, one has
\begin{align*}
\iota_{\partial_\rho}du
=
\frac{1}{\rho^{\alpha}} A.
\end{align*}
Combining these yields
\begin{align*}
\iota_{\boldsymbol{u}}du
&=
\frac{1}{\rho^{2\alpha-1}} \iota_{\boldsymbol{v}} d_S v
-
\frac{1}{\rho^{2\alpha}} (\iota_{\boldsymbol{v}} A)\, d\rho
+
\frac{1}{\rho^{2\alpha}} f A.
\end{align*}
Using \(\tilde{\boldsymbol{v}} = \rho \boldsymbol{v}\) in \eqref{eq:N}, we rewrite \(\iota_{\boldsymbol{u}}du\) as
\begin{align*}
\iota_{\boldsymbol{u}}d u
&=
\frac{1}{\rho^{2\alpha}}
\Bigl(
\iota_{\tilde{\boldsymbol{v}}} d_{S} v
+
f A
\Bigr)
-
\frac{1}{\rho^{2\alpha+1}}
\bigl(\iota_{\tilde{\boldsymbol{v}}} A\bigr)\, d\rho,
\end{align*}
For \(\Pi(\boldsymbol{x})=\rho^{-2\alpha}\Pi(\boldsymbol{y})\), we have
\begin{align*}
d\Pi
=
-2\alpha \rho^{-2\alpha-1}\Pi\, d\rho
+
\rho^{-2\alpha} d_S\Pi.
\end{align*}
Taking the radial and spherical components of \eqref{eq: ER}, 
\begin{align*}
\iota_{\tilde{\boldsymbol{v}}}d_S v+fA+d_S\Pi&=0,\\
\iota_{\tilde{\boldsymbol{v}}}A+2\alpha \Pi
&=0.
\end{align*}
We use the identity for the covariant derivative along with $\tilde{\boldsymbol{v}}$,
\begin{align}
(\nabla_{\tilde{\boldsymbol{v}}}\tilde{\boldsymbol{v}})^{\flat}=\iota_{\tilde{\boldsymbol{v}}}d_S v+d_S \frac{1}{2}|\tilde{\boldsymbol{v}}|^{2}.\label{eq:covariant}
\end{align}
Using the definition of \(\Pi\) and identifying \(\boldsymbol{v} = \tilde{\boldsymbol{v}}\) on \(\mathbb{S}^{n-1}\), we drop the tilde and obtain the following system governing the profile \((\boldsymbol{v}, f, q)\) on \(\mathbb{S}^{n-1}\):
\begin{equation} 
\begin{aligned}
(1-\alpha) f\boldsymbol{v}+ \nabla_{\boldsymbol{v}} \boldsymbol{v}+\nabla_{S} q&=0,\\
\nabla_{\boldsymbol{v}}f-|\boldsymbol{v}|^{2}-\alpha |f|^{2}-2\alpha q&=0,\\
\textrm{div}_S\ \boldsymbol{v}+(n-1-\alpha)f&=0.
\end{aligned}
\label{eq: HE}
\end{equation}

The covariant derivative of a scalar function \(f\) and a vector field \(\boldsymbol{v}\) on \(\mathbb{S}^{n-1}\) are given by
\[
\nabla_{\boldsymbol{v}} f = \boldsymbol{v} \cdot \nabla_{S} f,
\qquad
\nabla_{\boldsymbol{v}} \boldsymbol{v}
=
\boldsymbol{v} \cdot \nabla_{S} \boldsymbol{v}
+
|\boldsymbol{v}|^{2}\boldsymbol{y}.
\]
Here, the covariant divergence of \(\boldsymbol{v}\) is given by
\[
\operatorname{div}_{S} \boldsymbol{v}
=
\nabla_{S} \cdot \boldsymbol{v}.
\]
Henceforth, we write \(d\) in place of \(d_S\).

\begin{rem}[The Bernoulli function]
Taking the inner product of the first equation in \eqref{eq: ER} with \(\boldsymbol{u}\), we obtain
\[
\boldsymbol{u} \cdot \nabla \Pi = 0.
\]
For a \((-\alpha)\)-homogeneous vector field \(\boldsymbol{u}\), the function \(\Pi\) is \((-2\alpha)\)-homogeneous and satisfies
\begin{equation}
\nabla_{\boldsymbol{v}} \Pi = 2\alpha f \Pi.
\label{eq:1stI}
\end{equation}
\end{rem}

\begin{rem}[The vorticity function]
For even dimensions \(n=2m\), the vorticity function \(W = (du)^{m}/\mu\) is a first integral of \(\boldsymbol{u}\) (e.g., \cite[II.6.8]{AK21}), that is,
\[
\boldsymbol{u} \cdot \nabla W = 0.
\]
For a \((-\alpha)\)-homogeneous vector field \(\boldsymbol{u}\), the function \(W\) is \((-(\alpha+3)m+1)\)-homogeneous and satisfies
\begin{align}
\nabla_{\boldsymbol{v}} W = ((\alpha+3)m-1)\, f W.
\label{eq: W}
\end{align}
\end{rem}

\begin{rem}[The Navier--Stokes case]
The equations for the profile \((\boldsymbol{v}, f, q)\) on \(\mathbb{S}^{n-1}\) corresponding to \(( -1)\)-homogeneous solutions (\(\alpha = 1\)) of the Navier--Stokes equations in \(\mathbb{R}^{n} \setminus \{0\}\) are obtained by adding the tangential and normal components of \(\Delta \boldsymbol{u}\) to the right-hand sides of \eqref{eq: ER}, respectively \cite{Sverak2011}. Namely, it is of the form 
\begin{equation} 
\begin{aligned}
\nabla_{\boldsymbol{v}} \boldsymbol{v}+\nabla_{S} q&=\Delta_{H}\boldsymbol{v}+2\nabla_{S}f ,\\
\nabla_{\boldsymbol{v}}f-|\boldsymbol{v}|^{2}- |f|^{2}-2 q&=\Delta_{S}f,\\
\textrm{div}_S\ \boldsymbol{v}+(n-2)f&=0.
\end{aligned}
\label{eq: HNS}
\end{equation}
Here, $\Delta_{S}=\nabla_{S}\cdot \nabla_{S}$ is the Laplace-Beltrami operator on $\mathbb{S}^{n-1}$ and $-\Delta_{H}\boldsymbol{v}=((d\delta +\delta d)\boldsymbol{v}^{\flat})^{\sharp}$ is the Hodge Laplacian for 1-form/vector fields on $\mathbb{S}^{n-1}$ with the adjoint operator $\delta=d^{*}$. We provide a proof for \eqref{eq: HNS} in Appendix \ref{App}.
\end{rem}

\subsection{The irrotational case}

We begin with irrotational solutions $du=0$ to \eqref{eq: HE} with $d \Pi=0$. By the representation \eqref{eq:V} and the divergence-free condition $\nabla \cdot \boldsymbol{u}=0$, the profile $(\boldsymbol{v},f,q)$ of irrotational solutions satisfy
\begin{equation}
\begin{aligned}
d v&=0,\\
(\alpha-1)v+df&=0, \\
\textrm{div}_S\ \boldsymbol{v}+(n-1-\alpha)f&=0.
\end{aligned}
\label{eq: IR}
\end{equation}
By homogeneity, the condition $d\Pi=0$ can be written as 
\begin{align}
\alpha(2q+|\boldsymbol{v}|^{2}+|f|^{2})=0.   \label{eq:Pi}
\end{align}
We use the eigenvalues and eigenfunctions (spherical harmonics) of the Laplace–Beltrami operator on $\mathbb{S}^{n-1}$ \cite[Proposition 3.5]{AH12}.

\begin{prop}
For each $l \in \mathbb{N}_0$, there exist spherical harmonics satisfying
\[
-\Delta_{S} Y = l(l+n-2) Y.
\]
The multiplicity of this eigenvalue is given by
\begin{align}
\frac{(2l+n-2)(l+n-3)!}{l!(n-2)!}. \label{eq:M}
\end{align}
\end{prop}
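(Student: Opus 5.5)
The plan is to obtain the proposition from the standard harmonic-polynomial description of spherical harmonics. Let $\mathcal{P}_l$ be the homogeneous polynomials of degree $l$ on $\mathbb{R}^n$ and $\mathcal{H}_l\subset\mathcal{P}_l$ the harmonic ones. Every $P\in\mathcal{H}_l$ is $l$-homogeneous, so $P(\boldsymbol{x})=\rho^{l}Y(\boldsymbol{y})$ with $Y=P|_{\mathbb{S}^{n-1}}$. The polar splitting $\nabla=\boldsymbol{y}\,\partial_\rho+\rho^{-1}\nabla_S$ used above gives the standard decomposition
\[
\Delta=\partial_\rho^2+\frac{n-1}{\rho}\partial_\rho+\frac{1}{\rho^{2}}\Delta_S .
\]
Applying it to $\rho^lY$ gives
\[
0=\Delta P=\rho^{l-2}\bigl(l(l-1)+(n-1)l\bigr)Y+\rho^{l-2}\Delta_SY .
\]
Hence $-\Delta_S Y=l(l+n-2)Y$. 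Restriction to the sphere is injective on $\mathcal{H}_l$, because $P$ is determined by its homogeneity and its boundary values. This gives existence of nonzero eigenfunctions for each $l$ (for example $\operatorname{Re}(x_1+ix_2)^l$), and it shows the eigenspace contains a copy of $\mathcal{H}_l$.

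To show the eigenspace is exactly $\mathcal{H}_l|_{\mathbb{S}^{n-1}}$, I would argue as follows.
\begin{itemize}
\item By Stone--Weierstrass, restrictions of polynomials are dense in $C(\mathbb{S}^{n-1})$, hence in $L^2$.
\item By the Fischer decomposition $\mathcal{P}_l=\mathcal{H}_l\oplus\rho^2\mathcal{P}_{l-2}$, every polynomial restricts to a finite sum of elements of the $\mathcal{H}_k$.
\item $\Delta_S$ is symmetric, so restrictions of $\mathcal{H}_k$ for different $k$ are $L^2$-orthogonal, since the values $k(k+n-2)$ are distinct.
\end{itemize}
Therefore $L^2(\mathbb{S}^{n-1})=\bigoplus_k\mathcal{H}_k|_{\mathbb{S}^{n-1}}$. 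Any eigenfunction for $l(l+n-2)$ is orthogonal to every $\mathcal{H}_k$ with $k\neq l$, so it lies in $\mathcal{H}_l$.

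The main step, and the one I expect to be the real obstacle, is the Fischer decomposition, i.e.\ the surjectivity of $\Delta:\mathcal{P}_l\to\mathcal{P}_{l-2}$. I would first try the inner product $\langle P,Q\rangle=P(\partial)\overline{Q}$ on $\mathcal{P}_l$. In this inner product, multiplication by $\rho^2$ is adjoint to $\Delta$. The map $P\mapsto \rho^2P$ is injective, so its adjoint $\Delta:\mathcal{P}_l\to\mathcal{P}_{l-2}$ is surjective. Also $\mathcal{H}_l=\ker\Delta=(\rho^2\mathcal{P}_{l-2})^\perp$, which is exactly the decomposition.

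The multiplicity then follows by counting:
\[
\dim\mathcal{H}_l=\dim\mathcal{P}_l-\dim\mathcal{P}_{l-2}=\binom{l+n-1}{n-1}-\binom{l+n-3}{n-1}.
\]
Writing both terms over $l!\,(n-1)!$ and simplifying, $(l+n-1)(l+n-2)-l(l-1)=(n-1)(2l+n-2)$ yields $\frac{(2l+n-2)(l+n-3)!}{l!(n-2)!}$, which is \eqref{eq:M}. For $l=0,1$, where $\mathcal{P}_{l-2}=0$, the formula gives $1$ and $n$ directly.
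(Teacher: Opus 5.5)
The paper gives no proof of its own here and simply cites \cite[Proposition 3.5]{AH12}. Your argument is a correct, self-contained version of the standard harmonic-polynomial proof of this classical fact. Its steps are: the eigenvalue comes from applying the polar form of $\Delta$ to $\rho^{l}Y$; the Fischer decomposition $\mathcal{P}_l=\mathcal{H}_l\oplus\rho^{2}\mathcal{P}_{l-2}$ follows from the Fischer inner product; $L^{2}$-completeness identifies the whole eigenspace with $\mathcal{H}_l|_{\mathbb{S}^{n-1}}$; and the count $\dim\mathcal{P}_l-\dim\mathcal{P}_{l-2}$ gives the multiplicity.

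One small quibble concerns your closing remark. The formula evaluates to $1$ at $l=0$ only for $n\geq 3$. For $n=2$, $l=0$ it contains $(-1)!$ and has to be read as $1$ by convention. Your argument itself still correctly gives $\dim\mathcal{H}_0=1$ in that case.
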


\begin{thm}\label{t:irrotational}
Irrotational $(-\alpha)$-homogeneous solutions $(\boldsymbol{u},p)\in C^{1}(\mathbb{R}^{n}\setminus\{0\})$ to \eqref{eq: ER} exist if and only if one of the following cases holds:

\medskip
\noindent
(i) $\alpha=1$ and $n=2$. In this case, the irrotational solution is given by
\begin{align*}
\boldsymbol{u}=\frac{1}{\rho}(\boldsymbol{v}+f\boldsymbol{y}), 
\qquad 
p=\frac{1}{\rho^{2}}q,
\end{align*}
where $\boldsymbol{v}=v^{\theta}\boldsymbol{e}_{\theta}$ for $\boldsymbol{e}_{\theta}={}^{t}(-\sin\theta,\cos\theta)$ and $v^{\theta}, f, q \in \mathbb{R}$ are constants.

\medskip
\noindent
(ii) $\alpha \neq 1$ and $n\geq 2$, with
\begin{align}
\alpha = n-1+l,\quad 1-l,  \qquad l \in \mathbb{N}_0. \label{eq:alphaIR}
\end{align}
In this case, the irrotational solution is given by
\begin{align*}
\boldsymbol{u}=\frac{1}{\rho^{\alpha}}(\boldsymbol{v}+f\boldsymbol{y}), 
\qquad 
p=\frac{1}{\rho^{2\alpha}}q,
\end{align*}
where
\begin{align}
\boldsymbol{v}=\frac{1}{1-\alpha}\nabla_{S} f, \label{eq:v}
\end{align}
and $f$ is a spherical harmonic satisfying
\[
-\Delta_{S} f = (1-\alpha)(n-1-\alpha) f.
\]
The pressure is given by
\begin{equation}
q = -\frac{1}{2}\bigl(|\boldsymbol{v}|^{2}+f^{2}\bigr).  \label{eq:pressure}
\end{equation}
In the case $\alpha=0$, $q$ is determined up to an additive constant. The number of linearly independent such functions $f$ is given by \eqref{eq:M}.
\end{thm}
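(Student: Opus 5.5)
We work directly with the irrotational system \eqref{eq: IR}, together with the Bernoulli condition \eqref{eq:Pi}. Here $d\Pi=0$ follows from $du=0$ via the momentum equation in \eqref{eq: ER}. The argument splits according to whether $\alpha=1$.

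\textbf{Case $\alpha\neq 1$.} The second equation of \eqref{eq: IR} gives $v=\frac{1}{1-\alpha}df$, which is \eqref{eq:v}. In particular $dv=0$ holds automatically, so the only remaining constraint is the divergence condition. It reads
\[
\frac{1}{1-\alpha}\Delta_S f+(n-1-\alpha)f=0,
\quad\text{i.e.}\quad
-\Delta_S f=(1-\alpha)(n-1-\alpha)f .
\]
A nontrivial $f$ therefore exists if and only if $(1-\alpha)(n-1-\alpha)=l(l+n-2)$ for some $l\in\mathbb{N}_0$, by the Proposition on spherical harmonics. Solving this quadratic in $\alpha$ gives
\[
\alpha^2-n\alpha+(n-1)-l(l+n-2)=0 .
\]
Its discriminant is $n^2-4(n-1)+4l(l+n-2)=(n-2+2l)^2$, so $\alpha=\frac{n\pm(n-2+2l)}{2}$, that is, $\alpha=n-1+l$ or $\alpha=1-l$. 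This is exactly \eqref{eq:alphaIR}. The solution space is the corresponding eigenspace, whose dimension is \eqref{eq:M}.

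If $f\equiv 0$, then $v=0$ and $u=0$, which we regard as trivial. Note that $l=0$ gives the constant $f$ (with $\alpha=n-1$, or $\alpha=1$, the latter excluded here). For the pressure, \eqref{eq:Pi} with $\alpha\neq0$ forces \eqref{eq:pressure}. When $\alpha=0$, $\Pi$ is $0$-homogeneous with $d\Pi=0$, so $\Pi$ is constant and $q$ is determined only up to an additive constant.

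\textbf{Case $\alpha=1$.} Now \eqref{eq: IR} becomes $dv=0$, $df=0$ and $\operatorname{div}_S\boldsymbol v+(n-2)f=0$. Hence $f$ is a constant. Integrating the divergence equation over $\mathbb{S}^{n-1}$ gives $(n-2)f\,|\mathbb{S}^{n-1}|=0$.

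For $n\ge3$ this forces $f=0$. Then $v$ is closed and coclosed, i.e.\ a harmonic $1$-form on $\mathbb{S}^{n-1}$. Since $H^1(\mathbb{S}^{n-1})=0$ for $n\ge3$, we get $v=0$ by Hodge theory: $v=d\phi$ with $\Delta_S\phi=0$, so $\phi$ is constant. Thus $u=0$.

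For $n=2$ we have $f$ constant, and on $\mathbb{S}^1$ the conditions $dv=0$ (automatic) and $\operatorname{div}_S\boldsymbol v=0$ make $v^\theta$ constant. Then \eqref{eq:Pi} gives $q=-\frac12((v^\theta)^2+f^2)$, a constant, which is case (i).

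\textbf{Converse and regularity.} In each case the converse is a direct check: the profiles constructed above satisfy \eqref{eq: IR} and \eqref{eq:Pi}, and these reassemble into \eqref{eq: HE} by the derivation in \S\ref{S2}. For regularity, $f$ is smooth because it is an eigenfunction, so $(\boldsymbol u,p)\in C^1$.

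The only delicate points are the use of Hodge theory on the sphere when $\alpha=1$ and $n\ge3$, and interpreting the degenerate cases. In particular, $\alpha=1-l$ with $l=0$ coincides with $\alpha=1$ and is covered separately.
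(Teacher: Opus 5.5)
Your proposal is correct and follows the paper's proof essentially step by step. For $\alpha\neq 1$ you write $v=\frac{1}{1-\alpha}df$, turn the divergence condition into the spherical-harmonic eigenvalue equation, and solve the quadratic in $\alpha$. For $\alpha=1$ you use that $f$ is constant, integrate the divergence equation, and then apply $H^{1}(\mathbb{S}^{n-1})=0$ when $n\ge 3$, or the direct $\mathbb{S}^1$ computation when $n=2$. The extra details you supply simply fill in steps the paper leaves implicit: the discriminant $(n-2+2l)^2$, the Hodge-theoretic argument $v=d\phi$ with $\phi$ harmonic, the explicit value of $q$ in case (i), and the converse check.
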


\begin{proof}
(i) For $\alpha = 1$ and $n = 2$, $f$ is constant by the second equation in \eqref{eq: IR}. 
For $\boldsymbol{v} = v^{\theta} e_{\theta}$, the divergence-free condition
\[
0 = \operatorname{div}_{S} \boldsymbol{v} = \partial_{\theta} v^{\theta}
\]
implies that $v^{\theta}$ is constant. 
Finally, by \eqref{eq:Pi}, $q$ is constant.

(ii) For $\alpha = 1$, $f$ is constant by \eqref{eq: IR}. 
Using $n \geq 3$ and integrating the third equation in \eqref{eq: IR} over $\mathbb{S}^{n-1}$, we obtain $f = 0$. 
Thus $d v = 0$ and $\operatorname{div}_{S} \boldsymbol{v} = 0$. 
Since the first cohomology group of $\mathbb{S}^{n-1}$ is trivial, it follows that $\boldsymbol{v} = 0$.

For $\alpha \neq 1$, $\boldsymbol{v}$ is given by \eqref{eq:v} from the second equation in \eqref{eq: IR}. 
Taking the divergence of \eqref{eq:v} and using the third equation in \eqref{eq: IR}, we find that $f$ is a spherical harmonic. 
Thus, $(1-\alpha)(n-1-\alpha) = l(l+n-2)$ for some $l \in \mathbb{N}_0$, and the number of linearly independent such functions $f$ is given by \eqref{eq:M}. 
Solving this quadratic equation for $\alpha$ yields \eqref{eq:alphaIR}. 
Moreover, by \eqref{eq:Pi}, $q$ is given by \eqref{eq:pressure}. 
In the case $\alpha = 0$, the quantity
\[
2\Pi = 2q + |\boldsymbol{v}|^{2} + f^{2}
\]
is constant, and hence $q$ is determined by \eqref{eq:pressure} up to an additive constant.
\end{proof}

\section{$(-1)$-homogeneous solutions and proof of Theorem \ref{t:MT}}\label{S3}

We prove the rigidity of $(-1)$-homogeneous solutions in dimensions $n \geq 4$ (Theorem \ref{t:MT}). 
We also establish corresponding rigidity results in the cases $n = 2$ and $n = 3$. 
When $\alpha = 1$, the system \eqref{eq: HE} reduces to
\begin{equation}
\begin{aligned}
\nabla_{\boldsymbol{v}} \boldsymbol{v} + \nabla_{S} q &= 0,\\
\nabla_{\boldsymbol{v}} f - |\boldsymbol{v}|^{2} - f^{2} - 2q &= 0,\\
\operatorname{div}_{S} \boldsymbol{v} + (n-2)f &= 0.
\end{aligned}
\label{eq:-1}
\end{equation}
The first integral condition \eqref{eq:1stI} takes the form
\begin{align}
\nabla_{\boldsymbol{v}} \Pi = 2 f \Pi. \label{eq:1st}
\end{align}
Using the identity for the covariant derivative \eqref{eq:covariant} and setting $h = 2q + |\boldsymbol{v}|^{2}$, 
the first and second equations in \eqref{eq:-1} can be rewritten as
\begin{equation}
\begin{aligned}
\iota_{\boldsymbol{v}} d v + \frac{1}{2} d h &= 0,\\
\nabla_{\boldsymbol{v}} f - f^{2} - h &= 0.
\end{aligned}
\label{eq:dh}
\end{equation}
Taking the inner product of the first equation with $\boldsymbol{v}$, we obtain
\begin{align}
\nabla_{\boldsymbol{v}} h = 0. \label{eq:1stv}
\end{align}

The system \eqref{eq:-1} with $f = 0$ and $h = 0$ reduces to the Euler equations on $\mathbb{S}^{n-1}$ with vanishing Bernoulli function. 
Namely, $\boldsymbol{v}$ is geodesible on $\mathbb{S}^{n-1}$:
\begin{equation}
\iota_{\boldsymbol{v}} d v = 0, \quad
\operatorname{div}_{S} \boldsymbol{v} = 0.
\label{eq:B}
\end{equation}
Thus, the set of geodesible vector fields on $\mathbb{S}^{n-1}$ is contained in the class of $(-1)$-homogeneous Euler flows in $\mathbb{R}^{n} \setminus \{0\}$. 
We show that the converse holds.

\begin{thm}\label{t:RH}
Let $(\boldsymbol{u},p) \in C^{1}(\mathbb{R}^{n}\setminus\{0\})$ be a $(-1)$-homogeneous solution of \eqref{eq: ER}.

\medskip
\noindent
(i) If $n \geq 4$, then $(\boldsymbol{u},p)$ satisfies \eqref{eq:HV} with $f = 0$ and $q = -|\boldsymbol{v}|^{2}/2$, where $\boldsymbol{v}$ is a geodesible vector field on $\mathbb{S}^{n-1}$ satisfying \eqref{eq:B}.

\medskip
\noindent
(ii) If $n = 3$, then $(\boldsymbol{u},p) = (0,0)$.

\medskip
\noindent
(iii) If $n = 2$, then $(\boldsymbol{u},p)$ is irrotational and satisfies \eqref{eq:HV}, where $\boldsymbol{v} = v^{\theta}\boldsymbol{e}_{\theta}$ and $v^{\theta}, f, q \in \mathbb{R}$ are constants.
\end{thm}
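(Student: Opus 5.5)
The plan is to work entirely with the reduced system \eqref{eq:dh}, \eqref{eq:1stv} together with $\operatorname{div}_S\boldsymbol{v}=-(n-2)f$. I would show $f=h=0$ on $\mathbb{S}^{n-1}$; then the first equation of \eqref{eq:dh} becomes $\iota_{\boldsymbol{v}}dv=0$, and $h=0$ means $q=-|\boldsymbol{v}|^2/2$.

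\textbf{The key identity.} First I derive \eqref{eq:Identity0}. Using $\nabla_{\boldsymbol{v}}h=0$ and $\nabla_{\boldsymbol{v}}f=f^2+h$, I compute
\[
\operatorname{div}_S(\boldsymbol{v}f^kh^l)=f^kh^l\operatorname{div}_S\boldsymbol{v}+kf^{k-1}h^l\nabla_{\boldsymbol{v}}f,
\]
which expands to $(2-n+k)f^{k+1}h^l+kf^{k-1}h^{l+1}$. The only regularity needed is $C^1$ for $\boldsymbol{v}f^kh^l$, and this holds since $f,q,\boldsymbol{v}\in C^1$.

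\textbf{Odd $n\ge5$.} I integrate the identity over the closed sphere.
\begin{itemize}
\item With $k=n-2$ and $l=1$, only the term $(n-2)f^{n-3}h^2$ survives. Since $n-3$ is even, this term is nonnegative, so it must vanish, giving $fh=0$ (as $n-3\ge2$, the product $f^{n-3}h^2$ vanishes exactly where $fh$ does).
\item With $k=l=1$, the integrand is $(3-n)f^2h+h^2$. Since $fh=0$, this reduces to $h^2$, so $h=0$.
\item With $h=0$, $l=0$ and $k=1$, I get $\int(3-n)f^2=0$, so $f=0$ because $n\neq3$.
\end{itemize}

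\textbf{Even $n\ge4$.} Here the parity argument fails, so I aim to show $f\le0$. The integral of $\operatorname{div}_S\boldsymbol{v}$ over the sphere vanishes, so $\int f=0$; hence $f\le0$ forces $f=0$. Then $\nabla_{\boldsymbol{v}}f=0$ gives $h=-f^2=0$.

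To prove $f\le0$, I suppose $\Omega=\{f>0\}\neq\emptyset$ and integrate the identity over $\Omega$. On $\partial\Omega$ we have $f=0$, so the boundary flux of $\boldsymbol{v}f^kh^l$ vanishes for $k\ge1$. Since $\partial\Omega$ may be irregular, I would justify this by working on $\{f>\varepsilon\}$ for regular values $\varepsilon$ (Sard needs more smoothness, so I might instead use a cutoff $\phi_\varepsilon(f)$ and let $\varepsilon\to0$, controlling the error term $\phi_\varepsilon'(f)f^k\nabla_{\boldsymbol{v}}f$).

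On $\Omega$ the parity issue disappears because $f>0$. Taking $k=n-2$ and $l=1$ gives $\int_\Omega f^{n-3}h^2=0$, so $h=0$ on $\Omega$. Then $k=1$, $l=0$ gives $\int_\Omega(3-n)f^2=0$, so $f=0$ on $\Omega$, which is a contradiction. I expect the boundary-term justification to be the main technical obstacle.

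\textbf{$n=3$.} The identity with $k=1$ and $l=0$ gives $\int h=0$. By $k=l=1$, $\int h^2=0$, so $h=0$. Then $k=2$, $l=0$ gives $\int f^3=0$. This is not immediately decisive, so I would instead argue with $\Omega=\{f>0\}$ again: with $k=1$, $\int_\Omega h=0$; with $k=l=1$, $\int_\Omega h^2=0$, but then $l=0$ yields only $\int f^{k+1}(k-1)=0$ for $k\ge2$, giving $f=0$ on $\Omega$. Symmetrically, $f\ge0$ fails for $\{f<0\}$ using even $k$, so $f=0$.

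Then $\boldsymbol{v}$ satisfies $\iota_{\boldsymbol{v}}dv=0$ and is divergence-free on $\mathbb{S}^2$. In two dimensions $dv=\omega\,\mathrm{vol}$, so $\omega\boldsymbol{v}^\perp=0$. Where $\boldsymbol{v}\neq0$ we get $\omega=0$, and I would conclude $\omega\equiv0$ via the stream function $\psi$, since $\Delta\psi=\omega$ vanishes on the open set $\{\nabla\psi\neq0\}$ while $\omega=\Delta\psi$ is determined on the complement. Harmonic forms on $\mathbb{S}^2$ vanish, so $\boldsymbol{v}=0$ (alternatively cite \cite{Shv}).

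\textbf{$n=2$.} Here $\operatorname{div}_S\boldsymbol{v}=0$ makes $v^\theta$ constant. The first equation of \eqref{eq:dh} gives $dh=0$ since $dv=0$ on a circle, and the second gives $f'v^\theta=f^2+h$ with $f$ periodic. Periodicity forces $f$ constant: if $v^\theta\neq0$ this is a Riccati equation with constant coefficients, whose only periodic solutions are constant, and if $v^\theta=0$ then $f^2=-h$ is constant and $f$ is continuous. This reduces case (iii) to Theorem~\ref{t:irrotational}(i).
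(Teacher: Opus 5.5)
Your proposal is correct and is essentially the paper's argument: the identity \eqref{eq:identity}, integration over $\{f>0\}$ with $(k,l)=(n-2,1)$ and then $l=0$ to force $f\le 0$, then $\int_{\mathbb{S}^{n-1}} f=0$, and the periodicity argument for $n=2$. Your whole-sphere parity argument for odd $n\ge 5$ is the one sketched in the paper's introduction, whereas the written proof runs the $\{f>0\}$ argument uniformly for all $n\ge 3$ (for $n=3$, $(k,l)=(1,1)$ gives $h=0$ on $\{f>0\}$ and $k=2$ gives the contradiction). Two of your refinements go beyond the paper: your cutoff $\phi_\varepsilon(f)$ is a cleaner way to remove the boundary flux than passing to $\{f>\varepsilon\}$ (which needs regular values beyond $C^1$ and has $f=\varepsilon\neq 0$ on the boundary), and for $n=3$ the step $\omega\boldsymbol{v}=0\Rightarrow\omega\equiv 0$ is most simply closed by continuity, since $\omega$ vanishes on the dense open set $\{\boldsymbol{v}\neq 0\}\cup\operatorname{int}\{\boldsymbol{v}=0\}$.
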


\begin{proof}
We first consider the case $n \geq 3$. We work with the system
\begin{equation}
\begin{aligned}
\boldsymbol{v}\cdot \nabla_{S} h &= 0,\\
\boldsymbol{v}\cdot \nabla_{S} f &= f^{2} + h,\\
\operatorname{div}_{S} \boldsymbol{v} &= (2-n)f.
\end{aligned}
\label{eq:3eq}
\end{equation}
We observe that for integers $k \in \mathbb{N}$ and $l \in \mathbb{N}_{0}$,
\begin{align}
\operatorname{div}_{S} (\boldsymbol{v} f^{k} h^{l}) 
= (2-n+k) f^{k+1} h^{l} + k f^{k-1} h^{l+1}.
\label{eq:identity}
\end{align}

We first show that $f \leq 0$ on $\mathbb{S}^{n-1}$. 
Suppose, for contradiction, that there exists a point $y_{0} \in \mathbb{S}^{n-1}$ such that $f(y_{0}) > 0$. 
By continuity, $f$ is positive in a neighborhood of $y_{0}$. 
Let $\Omega$ be the largest connected open subset of $\mathbb{S}^{n-1}$ containing $y_{0}$ on which $f > 0$. 
By the maximality of $\Omega$, we have $f = 0$ on $\partial \Omega$. 
By possibly replacing $\Omega$ with $\Omega_{\varepsilon} = \{ f > \varepsilon \}$ for some $\varepsilon > 0$, we may assume that $\partial \Omega$ is of class $C^{1}$ and that $\nabla_{S} f \neq 0$ on $\partial \Omega$.

Choosing $k = n-2$ and $l = 1$ in \eqref{eq:identity}, the first term on the right-hand side vanishes. 
Integrating over $\Omega$ and applying the divergence theorem, we obtain
\begin{align*}
\int_{\Omega} f^{n-3} h^{2} \, \mu_{S} = 0,
\end{align*}
where $\mu_{S}$ denotes the volume form on $\mathbb{S}^{n-1}$. 
Since $f > 0$ on $\Omega$, it follows that $h = 0$ on $\Omega$.

Next, choosing $k \neq n-2$ and $l = 0$ in \eqref{eq:identity}, we obtain
\begin{align*}
\int_{\Omega} f^{k+1} \, \mu_S = 0,
\end{align*}
which contradicts the fact that $f > 0$ on $\Omega$. 
Hence, $f \leq 0$ on $\mathbb{S}^{n-1}$.

Integrating the third equation of \eqref{eq:3eq} over $\mathbb{S}^{n-1}$, we obtain
\begin{align*}
0=\int_{\mathbb{S}^{n-1}} \nabla_{S}\cdot \boldsymbol{v}
= (2-n)\int_{\mathbb{S}^{n-1}} f,
\end{align*}
and thus $\int_{\mathbb{S}^{n-1}} f=0$. Since $f\leq 0$, we conclude that $f=0$ on $\mathbb{S}^{n-1}$. Then the second equation of \eqref{eq:3eq} implies $h=0$. Hence $\boldsymbol{v}$ is geodesible and satisfies \eqref{eq:B}. For $n=3$, the condition $\iota_{\boldsymbol{v}}dv=0$ implies $dv=0$, and hence $v=0$, since any closed one-form on $\mathbb{S}^{2}$ is exact and thus must vanish. This proves (i) and (ii).

It remains to consider the case $n=2$. From the third equation of \eqref{eq:3eq}, we have
\begin{align*}
0=\nabla_{S}\cdot \boldsymbol{v}=\partial_{\theta}v^{\theta},
\end{align*}
and thus $v^{\theta}$ is constant. Since $dv=0$, the first equation of \eqref{eq:dh} implies that $h$ is constant, and hence $q$ is also constant. If $v^{\theta}=0$, then the second equation of \eqref{eq:3eq} implies that $f$ is constant. 

Assume $v^{\theta}\neq 0$. Then
\begin{align*}
\partial_{\theta}f=\frac{1}{v^{\theta}}(f^{2}+h).
\end{align*}
Let $M=\sup_{\mathbb{S}^{1}} f$. At a maximum point, we have $M^{2}+h=0$, and hence $f^{2}+h=f^{2}-M^{2}\leq 0$ on $\mathbb{S}^{1}$. Thus $\partial_{\theta}f$ does not change sign. Since $f$ is periodic, it follows that $f$ is constant. This proves (iii).
\end{proof}

\begin{proof}[Proof of Theorem \ref{t:MT}]
The result follows from Theorem \ref{t:RH}.
\end{proof}

\section{Extensions via geodesible fields}\label{S4}

\subsection{$(-\alpha)$-homogeneous Euler}

We prove the rigidity of $(-\alpha)$-homogeneous solutions to \eqref{eq: ER} in Theorems \ref{t:MT2} and \ref{t:MT3}. 
Setting $h = 2q + |\boldsymbol{v}|^{2}$, the system \eqref{eq: HE} can be rewritten as
\begin{equation}
\begin{aligned}
(1-\alpha) f \boldsymbol{v} + \iota_{\boldsymbol{v}} d\boldsymbol{v} + \frac{1}{2} d h &= 0,\\
\nabla_{\boldsymbol{v}} f - (1-\alpha)|\boldsymbol{v}|^{2} - \alpha f^{2} - \alpha h &= 0,\\
\operatorname{div}_{S} \boldsymbol{v} + (n-1-\alpha) f &= 0.
\end{aligned}
\label{eq:HE2}
\end{equation}
If $\boldsymbol{v}$ is geodesible, i.e. satisfies \eqref{eq:B}, then the system reduces to
\begin{equation}
\begin{aligned}
(1-\alpha) f \boldsymbol{v} + \frac{1}{2} d h &= 0,\\
\nabla_{\boldsymbol{v}} f - (1-\alpha)|\boldsymbol{v}|^{2} - \alpha f^{2} - \alpha h &= 0,\\
(n-1-\alpha) f &= 0.
\end{aligned}
\label{eq:HE2g}
\end{equation}

\begin{proof}[Proof of Theorem \ref{t:MT2}]
Suppose that $(\boldsymbol{u},p)$ defined by \eqref{eq:GF2} is a $(-\alpha)$-homogeneous solution of \eqref{eq: ER}. 
Then $(\boldsymbol{v},f,q)$ satisfies the system \eqref{eq:HE2g}. 
Since $\alpha \neq n-1$, we obtain $f = 0$. 
Substituting this into the first and second equations yields $d h = 0$ and
\[
(1-\alpha)|\boldsymbol{v}|^{2} + \alpha h = 0.
\]
Since $\alpha \neq 1$, it follows that $|\boldsymbol{v}|$ is constant. 
Since $\boldsymbol{v} \neq 0$, it follows that $\alpha \neq 0$, and hence
\[
q = -\frac{1}{2\alpha}|\boldsymbol{v}|^{2}.
\]
The Bernoulli function $\Pi$ is given by \eqref{eq:BH}. On even-dimensional spheres, any smooth nonvanishing vector field does not exist by the Poincar\'{e}--Hopf theorem; in particular, a vector field with constant nonzero magnitude cannot exist.
\end{proof}

\subsection{$(-\alpha)$-homogeneous geodesible}

It remains to show Theorem \ref{t:MT3}. For a $(-\alpha)$-homogeneous geodesible field $\boldsymbol{u}$ in $\mathbb{R}^{n}\setminus \{0\}$, the Bernoulli function $\Pi$ satisfies $\alpha\Pi=0$ since $\Pi$ vanishes for $\alpha\neq 0$ and is constant for $\alpha=0$. This condition is expressed as 
\begin{align}
\alpha(h+f^{2})=0.  \label{eq:geodesible}
\end{align}
Under this condition, the system \eqref{eq: HE} reduces to
\begin{equation}
\begin{aligned}
(1-\alpha) f\boldsymbol{v}+ \iota_{\boldsymbol{v}}d v+\frac{1}{2}dh&=0,\\
\nabla_{\boldsymbol{v}}f-(1-\alpha)|\boldsymbol{v}|^{2}&=0,\\
\operatorname{div}_S \boldsymbol{v}+(n-1-\alpha)f&=0.
\end{aligned}
\label{eq: B=0}
\end{equation}

\begin{proof}[Proof of Theorem \ref{t:MT3}]
Suppose that $\boldsymbol{u}$ is a $(-\alpha)$-homogeneous geodesible field given by \eqref{eq:GF2} for $\alpha\neq 1$ with a geodesible vector field $\boldsymbol{v}$ on $\mathbb{S}^{n-1}$. By the second equation in \eqref{eq: B=0},
\begin{align*}
\boldsymbol{v}\cdot \nabla_{S}f=(1-\alpha)|\boldsymbol{v}|^{2}.
\end{align*}
Integrating this identity over $\mathbb{S}^{n-1}$ and using $\textrm{div}_{S}\ \boldsymbol{v}=0$, we find that $\boldsymbol{v}=0$. 
\end{proof}

\section{$(-\alpha)$-homogeneous geodesible solutions}\label{S5}

We remark that the rigidity of $(-1)$-homogeneous solutions to \eqref{eq: HE} in Theorem \ref{t:MT} extends to the case $1 < \alpha \leq n-1$ for $(-\alpha)$-homogeneous geodesible solutions.

\begin{thm}\label{t:5.1}
Let $n \geq 3$. 
For $1 < \alpha \leq n-1$, there are no $(-\alpha)$-homogeneous geodesible solutions $(\boldsymbol{u},p) \in C^{1}(\mathbb{R}^{n}\setminus\{0\})$ to \eqref{eq: HE}, except for the irrotational solution in the case $\alpha = n-1$.
\end{thm}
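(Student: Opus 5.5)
The plan is to work entirely with the profile system on $\mathbb{S}^{n-1}$ and to extract an integral identity whose two sides have opposite signs when $1<\alpha\le n-1$. First, since $\boldsymbol{u}$ is geodesible, $\iota_{\boldsymbol{u}}du=0$, so \eqref{eq: ER} gives $d\Pi=0$. Thus $\Pi$ is a constant. Because $\Pi$ is $(-2\alpha)$-homogeneous and $\alpha>1$, this constant must vanish. This is exactly condition \eqref{eq:geodesible}, namely $h+f^{2}=0$, so the profile $(\boldsymbol{v},f,q)$ satisfies the reduced system \eqref{eq: B=0}. In particular
\begin{align*}
\nabla_{\boldsymbol{v}} f=(1-\alpha)|\boldsymbol{v}|^{2},\qquad \operatorname{div}_{S}\boldsymbol{v}=-(n-1-\alpha)f .
\end{align*}

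The key step is to integrate the first of these over $\mathbb{S}^{n-1}$ and integrate by parts, using the second:
\begin{align*}
(1-\alpha)\int_{\mathbb{S}^{n-1}}|\boldsymbol{v}|^{2}\,\mu_S=\int_{\mathbb{S}^{n-1}}\boldsymbol{v}\cdot\nabla_S f\,\mu_S=-\int_{\mathbb{S}^{n-1}} f\,\operatorname{div}_S\boldsymbol{v}\,\mu_S=(n-1-\alpha)\int_{\mathbb{S}^{n-1}} f^{2}\,\mu_S .
\end{align*}
For $1<\alpha\le n-1$ the left-hand side is $\le 0$ and the right-hand side is $\ge 0$, so both vanish. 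Since $\alpha\neq 1$, we get $\boldsymbol{v}\equiv 0$. This requires only $C^{1}$ regularity on the compact manifold without boundary, so no boundary terms appear. The assumption $n\ge 3$ merely guarantees that the interval $(1,n-1]$ is nonempty.

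It remains to split into cases. If $1<\alpha<n-1$, then $\operatorname{div}_S\boldsymbol{v}=0$ forces $(n-1-\alpha)f=0$, so $f=0$ and $\boldsymbol{u}=0$. Then $h=-f^{2}=0$ gives $q=0$, so the solution is trivial. If $\alpha=n-1$, then $f$ is not yet determined. Here I will use the first equation of \eqref{eq: B=0} with $\boldsymbol{v}=0$, which gives $dh=0$. Hence $f^{2}=-h$ is constant, and by continuity on the connected sphere $f$ itself is constant. Then $\boldsymbol{u}=f\boldsymbol{y}/\rho^{n-1}$ with $q=-f^{2}/2$. This is precisely the irrotational solution of Theorem \ref{t:irrotational}(ii) with $l=0$: it is the gradient of the fundamental solution, and one checks $du=0$ via \eqref{eq:V} with $A=(2-n)\cdot 0-df=0$ and $d_Sv=0$.

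The main point requiring care is the first step: justifying that geodesibility forces $\Pi\equiv 0$ rather than merely constant, which uses $\alpha\neq 0$. The sign bookkeeping in the integrated identity also needs care, since it is exactly what singles out the range $1<\alpha\le n-1$. Beyond that, the argument is a direct computation.
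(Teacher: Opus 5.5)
Your proposal is correct and takes essentially the same route as the paper: the paper integrates $\operatorname{div}_S(\boldsymbol{v}f)=(1-\alpha)|\boldsymbol{v}|^{2}+(\alpha+1-n)f^{2}$ over $\mathbb{S}^{n-1}$, which is exactly your integration by parts. You also supply justifications the paper leaves implicit, namely that $\Pi\equiv 0$ by $(-2\alpha)$-homogeneity with $\alpha\neq 0$, and that $f$ is constant when $\alpha=n-1$ via $dh=0$, $h=-f^{2}$ and connectedness of the sphere.
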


\begin{proof}
By the second and the third equations in \eqref{eq: B=0},
\begin{equation} 
\nabla_{S}\cdot (\boldsymbol{v}f)=(1-\alpha)|\boldsymbol{v}|^{2}
+(\alpha+1-n)f^{2}.
\end{equation}
For $1<\alpha< n-1$, the coefficients of the right-hand sides are negative. By integrating this identity on $\mathbb{S}^{n-1}$, $\boldsymbol{v}=0$ and $f=0$ follow. For $\alpha=n-1$, $\boldsymbol{v}=0$ and $f$ is constant. Such $\boldsymbol{u}$ is irrotational and given in Theorem \ref{t:irrotational} (ii).
\end{proof}

\begin{rem}\label{rem}
For $n = 3$, there are no $(-\alpha)$-homogeneous rotational Beltrami solutions $(\boldsymbol{u},p) \in C^{2}(\mathbb{R}^{3}\setminus\{0\})$ for $\alpha < 1$ \cite[Proposition 3.2]{Shv} (see also \cite[Theorem 2.5]{Abe11}). 
The proof relies on an identity involving the radial component of the vorticity $\nabla \times \boldsymbol{u}$. 
In dimensions $n \geq 4$, it is unclear whether a similar identity holds for the two-form $du$.

On the other hand, for $\alpha>2$, axisymmetric $(-\alpha)$-homogeneous Beltrami fields $(\boldsymbol{u},p)\in C^{1}(\mathbb{R}^{3}\setminus\{0\})$ do exist \cite[Theorem 1.1]{Abe11}. Continuous axisymmetric $(-\alpha)$-homogeneous Beltrami fields $(u,p)\in C(\mathbb{R}^{3})$ also exist for $\alpha<0$. Their construction is based on the Grad–Shafranov equation associated with the axisymmetric stream function; see Figure \ref{f2}.
\end{rem}

\begin{figure}[h]
\centering
\includegraphics[width=0.85\linewidth]{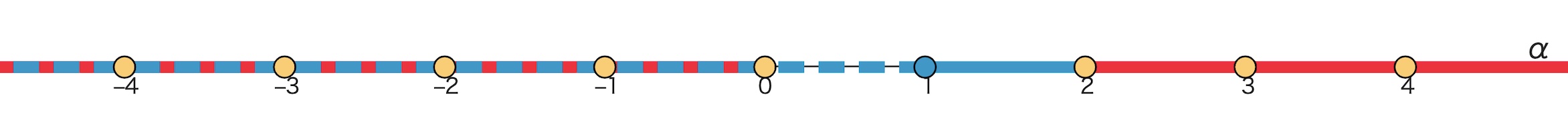}
\caption{The existence and nonexistence ranges of $\alpha\in \mathbb{R}$ for $(-\alpha)$-homogeneous Beltrami solutions in $\mathbb{R}^{3}\setminus\{0\}$. The blue line indicates the range of $\alpha$ for which no solutions exist, while the red line indicates the range for which solutions exist. The yellow dots represent the integer values of $\alpha$ corresponding to irrotational solutions.}\label{f2}
\end{figure}

In Theorem \ref{t:RH}, we showed that, in the case $n=2$, $(-1)$-homogeneous solutions to \eqref{eq: HE} are irrotational. 
More generally, in even dimensions $n=2m$, the vorticity two-form of $(-1)$-homogeneous solutions is degenerate, in the sense that $(d u)^{m}=0$. 
In fact, we prove a stronger result for $(-\alpha)$-homogeneous solutions.

\begin{thm}\label{t:W}
Let $n=2m\geq 2$. For $1/m-3<\alpha\leq 1$, the vorticity two-form $du$ of $(-\alpha)$-homogeneous solutions to \eqref{eq: HE} is degenerate.
\end{thm}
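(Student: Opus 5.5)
Write $\beta=(\alpha+3)m-1$, so the vorticity function satisfies $\nabla_{\boldsymbol{v}}W=\beta fW$ by \eqref{eq: W}. The plan is to show $W\equiv 0$ on $\mathbb{S}^{n-1}$; by homogeneity this gives $(du)^m=0$ everywhere. The method is the same integration-by-parts argument used for Theorem \ref{t:RH}.

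The key computation combines the first-integral relation with the divergence equation $\operatorname{div}_S\boldsymbol{v}=-(n-1-\alpha)f$ from \eqref{eq: HE}. For any integer $k\ge1$ (even, so that $W^k\ge 0$),
\begin{align*}
\operatorname{div}_S(\boldsymbol{v}W^{k})=kW^{k-1}\nabla_{\boldsymbol{v}}W+W^k\operatorname{div}_S\boldsymbol{v}=\bigl(k\beta-(n-1-\alpha)\bigr)fW^{k}.
\end{align*}
This is an identity of type \eqref{eq:identity}, but it involves only $f$ and $W$. To get a sign, I would also use the $f$-equation. For $\alpha=1$ it reads $\nabla_{\boldsymbol{v}}f=f^2+h$ with $\nabla_{\boldsymbol{v}}h=0$. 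For general $\alpha$ it reads $\nabla_{\boldsymbol{v}}f=|\boldsymbol{v}|^2+\alpha f^2+2\alpha q$. A cleaner route is to use $\Pi$: since $\nabla_{\boldsymbol{v}}\Pi=2\alpha f\Pi$, one expects a conserved combination such as $W^{a}\Pi^{b}$ with $a\beta+2\alpha b=0$.

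My main plan is a Liouville-type integration. Because
\begin{align*}
\operatorname{div}_S(\boldsymbol{v}W^k)=c_k fW^k,\qquad c_k=k\beta-(n-1-\alpha),
\end{align*}
integrating over $\mathbb{S}^{n-1}$ gives $c_k\int fW^k=0$. Choosing $k$ with $c_k\neq0$ yields $\int fW^k=0$ for all even $k$ outside at most one exceptional value. To conclude, I also need the integrals $\int f^{j}W^k$ for $j\ge 2$. Expanding $\operatorname{div}_S(\boldsymbol{v}f^jW^k)$ with the $f$-equation brings in $|\boldsymbol{v}|^2$ and $q$. The hypothesis $\alpha\le1$ should make the coefficient of $|\boldsymbol{v}|^2$ and of $f^2$ signed, in the sense that $(1-\alpha)|\boldsymbol{v}|^2\ge0$. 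The hypothesis $\alpha>1/m-3$ is exactly $\beta>0$, which should fix the sign of $c_k$ for large $k$.

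Concretely, I would first localize as in the proof of Theorem \ref{t:RH}. Suppose $W(y_0)\ne0$ and set $\Omega=\{W^2>\varepsilon\}$. On $\partial\Omega$ the level set of $W$ is invariant under the flow only where $f$ vanishes, so I would instead take a region bounded by level sets of a genuinely conserved quantity. The natural choice is the $\boldsymbol{u}$-invariant function $W$ itself in $\mathbb{R}^n$: choose $D=\{x:|W(x)|>\varepsilon\}\cap\{1<\rho<R\}$ and integrate $\operatorname{div}\boldsymbol{u}=0$ against $|W|^{s}$. Flux through level sets of $W$ vanishes because $\boldsymbol{u}\cdot\nabla W=0$. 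The radial flux on the shells is $\rho^{n-1-\alpha-s\beta'}\int_{\{|W|>\varepsilon\}} f|W|^{s}$, where $\beta'=(\alpha+3)m-1=\beta$. Choosing $s$ so the exponent is nonzero forces $\int f|W|^s=0$ on $\{|W|>\varepsilon\}$ for a continuum of $s$. Letting $s\to\infty$ then concentrates on the maximum of $|W|$. Combining this with the $f$-equation should show that $f$ vanishes where $|W|$ is maximal, and then that $\nabla_{\boldsymbol{v}}f\ge0$ there, which forces $\boldsymbol{v}=0$ and hence $W=0$ there. That is a contradiction.

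I expect the main obstacle to be the last step: turning the vanishing moments $\int f|W|^s=0$ into a contradiction. This needs a signed quantity, and the only place the ranges of $\alpha$ can provide one is the sign of $(1-\alpha)|\boldsymbol{v}|^2$ together with $\beta>0$. If the direct argument fails, I would try the weighted identity $\operatorname{div}_S(\boldsymbol{v}fW^k)=(\nabla_{\boldsymbol{v}}f)W^k+c_kf^2W^k$ and look for $k$ making both terms sign-definite.
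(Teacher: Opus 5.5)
The step you flag as the main obstacle is where the argument breaks, and it cannot be repaired. Write $\Pi=q+\frac12(|\boldsymbol{v}|^2+f^2)$ for the spherical profile of the Bernoulli function. Then the second equation of \eqref{eq: HE} reads $\nabla_{\boldsymbol{v}}f=(1-\alpha)|\boldsymbol{v}|^2+2\alpha\Pi$. So the weighted identity you propose as a fallback, which is exactly the identity the paper's proof uses, is really
\[
\operatorname{div}_S(\boldsymbol{v}fW^k)=W^k\bigl((1-\alpha)|\boldsymbol{v}|^2+2\alpha\Pi+c_kf^2\bigr).
\]
The Bernoulli term $2\alpha\Pi W^k$ has no sign, and the paper's displayed identity simply omits it. The same term defeats your maximum-point argument. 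At a maximum of $|W|$ you do get $f=0$ from $\nabla_{\boldsymbol{v}}W=\beta fW$. But $\nabla_{\boldsymbol{v}}f=(1-\alpha)|\boldsymbol{v}|^2+2\alpha\Pi$ there carries no information about $\boldsymbol{v}$.

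In fact, for $\alpha\neq 0$ the statement is false, so no choice of weights or localisation can close the gap. Take the Hopf flow of Theorem \ref{t:MT2}: $\boldsymbol{u}=\rho^{-\alpha-1}J\boldsymbol{x}$ with $J\boldsymbol{x}=(x^2,-x^1,\dots,x^{2m},-x^{2m-1})$, and $p=-\frac{1}{2\alpha}\rho^{-2\alpha}$. This is a smooth $(-\alpha)$-homogeneous solution for every $\alpha\neq 0$, with profile $f=0$, $|\boldsymbol{v}|=1$, $q=-\frac{1}{2\alpha}$. A direct computation gives
\[
(du)^m=(-2)^{m-1}m!\,(\alpha-1)\,\rho^{-m(\alpha+1)}\,dx^1\wedge\cdots\wedge dx^{2m},
\]
which is nonzero for every $\alpha\in(1/m-3,1)\setminus\{0\}$. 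For $m=1$ this is just the vortex $\rho^{-\alpha}\boldsymbol{e}_\theta$, whose vorticity is $(1-\alpha)\rho^{-\alpha-1}$. In this example $W$ is constant on the sphere and $f\equiv 0$. Moreover $(1-\alpha)|\boldsymbol{v}|^2+2\alpha\Pi=(1-\alpha)+(\alpha-1)=0$ exactly, so every moment identity you derive is satisfied without forcing $W=0$.

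You also inherited an incorrect exponent from \eqref{eq: W}. The coefficients of $(du)^m$ are homogeneous of degree $-m(\alpha+1)$, as the example confirms. Hence $\nabla_{\boldsymbol{v}}W=m(\alpha+1)fW$, and the condition $\beta>0$ means $\alpha>-1$; the threshold $1/m-3$ is an artifact.

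What your method does prove is the case $\alpha\Pi\equiv 0$, in particular $\alpha=0$. There $\nabla_{\boldsymbol{v}}f=|\boldsymbol{v}|^2$, and $c_k=km-(2m-1)>0$ for even $k\ge 2$. Integrating gives $W^k|\boldsymbol{v}|^2=W^kf^2=0$. So $\boldsymbol{u}$ vanishes on the open cone over $\{W\neq 0\}$, which forces $W=0$ there, a contradiction.

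At $\alpha=1$ the conclusion holds for a different reason, which your proposal does not address. Theorem \ref{t:RH} gives $f$ constant, so $A=0$ in \eqref{eq:V} and $du=d_Sv$. Then $(d_Sv)^m=0$ because it is a $2m$-form built only from the $2m-1$ tangential directions.
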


\begin{proof}
For $\alpha=1$, it follows from \eqref{eq:V} that $du=d_S v$ for a one-form $v$ on $\mathbb{S}^{2m-1}$. Since $d_Sv$ is a two-form on $\mathbb{S}^{2m-1}$, 
\begin{align*}
(du)^{m}=(d_Sv)^{m}=0.
\end{align*}

For $1/m-3<\alpha<1$, the vorticity function $W=(du)^{m}/\mu$ satisfies \eqref{eq: W}. Arguing as in the proof of Theorem \ref{t:MT}, we obtain
\begin{align*}
\nabla_{S}\cdot (\boldsymbol{v} f W^{k})
= \bigl(k((\alpha+3)m-1) - (2m-1-\alpha)\bigr) f^{2} W^{k}
+ (1-\alpha)|\boldsymbol{v}|^{2} W^{k}
\end{align*}
for $k\in\mathbb{N}$. Choosing $k$ sufficiently large and integrating this identity over $\mathbb{S}^{2m-1}$, we obtain
\[
fW=0 \quad \text{and} \quad \boldsymbol{v}W=0.
\]
If $W\not\equiv 0$, then $f=0$ and $\boldsymbol{v}=0$ on the support of $W$, which implies $du=0$ and hence $W=0$, a contradiction. 
Therefore, $W=0$, and consequently $(du)^{m}=0$. 
\end{proof}

\section{Open questions}\label{S6}

The results obtained in this paper raise a number of further questions. We conclude by discussing several open problems concerning the topology and rigidity of higher-dimensional Euler flows.

\subsection{The four-dimensional Euler equations}

In even dimensions $n=2m$, the vorticity function
\[
W = \frac{(du)^m}{\mu}
\]
is a first integral of the velocity field $\boldsymbol{u}$ of the steady Euler flows in $\mathbb{R}^{2m}$, that is,
\[
\iota_{\boldsymbol{u}} dW = 0.
\]
In particular, in four dimensions, both the Bernoulli function $\Pi$ and the vorticity function $W$ are independent of the first integrals. Consequently, the flow is integrable in the sense that streamlines are constrained to invariant two-dimensional manifolds \cite{GK94}, \cite[Theorem 6.5]{AK21}. In three dimensions, streamlines are integrable on Bernoulli surfaces (i.e., level sets of $\Pi$), whereas in four dimensions, this condition alone does not rule out chaotic behavior. In odd dimensions, streamlines are referred to as \textit{chaotic} when they are not confined to the level sets of a function \cite[Definition 6.1]{AK21}. Moreover, on odd-dimensional Riemannian manifolds, Beltrami-type solutions with chaotic streamlines have been constructed in \cite{Ghrist} through an appropriate choice of the metric; see also \cite[3.4]{C21}.

\begin{q}
Do there exist solutions to the four-dimensional Euler equations whose streamlines exhibit chaotic behavior?
\end{q}

For $(-1)$-homogeneous solutions to the Euler equations on $\mathbb{R}^{2m}\setminus \{0\}$, both the Bernoulli function and the vorticity function vanish; see \S\ref{S4}. In particular, the flow $\boldsymbol{u}=\rho^{-1}\boldsymbol{v}$ provides an example of a four-dimensional solution with $\Pi=0$ and $W=0$. Its streamlines coincide with those of a Beltrami field on $\mathbb{S}^{3}$.

It remains an open problem whether the streamlines of Beltrami fields on $\mathbb{S}^3$ can exhibit chaotic behavior \cite[Example 6.4]{AK21}, in contrast to the ABC flow on $\mathbb{T}^3$. The spectrum of the curl operator on $\mathbb{S}^3$ consists of integers with absolute value greater than or equal to two\cite{Fo89}, and the corresponding eigenfields can realize arbitrary knots and links as streamlines \cite{EPS17}. 

A classical example of a Beltrami field on $\mathbb{S}^3$ is the Hopf field. More generally, the Hopf field on $\mathbb{S}^{2m-1}$ is given by
\begin{align}
\boldsymbol{v}
=
x^2\partial_{x^1}-x^1\partial_{x^2}
+x^4\partial_{x^3}-x^3\partial_{x^4}
+\cdots
+x^{2m}\partial_{x^{2m-1}}-x^{2m-1}\partial_{x^{2m}}. \label{eq: HF}
\end{align}
These vector fields are Beltrami and geodesible on $\mathbb{S}^{2m-1}$. We remark that steady Euler flows on $\mathbb{S}^3$ with non-constant Bernoulli function are constructed in \cite{KKPS14, Sl20}; see also \cite{KKPS20}, \cite[A.II.1]{AK21}.

The Hopf field on $\mathbb{S}^{2m-1}$ has a constant magnitude $|\boldsymbol{v}|=1$ and provides $(-\alpha)$-homogeneous solutions to the Euler equations with non-zero Bernoulli function \eqref{eq:BH} for $\alpha\neq 0,\ 1,\ 2m-1$ and $m\geq 2$ by Theorem \ref{t:MT2}. Its vorticity function $W$ vanishes when $1/m-3<\alpha\leq 1$; see Theorem \ref{t:W}.

\subsection{The rigidity of homogeneous geodesible fields}

Beltrami fields in three dimensions exhibit rigidity. More precisely, there are no nontrivial smooth Beltrami fields satisfying
\[
\boldsymbol{u} = o(|\boldsymbol{x}|^{-1}) \quad \text{as } |\boldsymbol{x}| \to \infty,
\]
see \cite{Na14, CC15}; cf.\ also \cite{EP12, EP15}. Moreover, Beltrami fields do not exist for proportionality factors $\lambda$ whose level sets are diffeomorphic to $\mathbb{S}^2$ (for instance, when $\lambda$ is radially symmetric or possesses a nondegenerate extremum), see \cite{EP16}.

On the other hand, a $(-1)$-homogeneous Euler flow in $\mathbb{R}^3$ satisfies
\[
\boldsymbol{u} = O(|\boldsymbol{x}|^{-1})
\quad \text{as } |\boldsymbol{x}| \to 0 \text{ and } |\boldsymbol{x}| \to \infty,
\]
cf.\cite{Na14, CC15}, and its nonexistence is closely related to that in \cite{EP16}. Indeed,  in two dimensions, the condition 
\[
\iota_{\boldsymbol{v}} d v = 0,
\]
implies
\[
d v = 0,
\]
so that $v$ is a harmonic one-form on $\mathbb{S}^2$ and hence $v=0$. This shows that the nonexistence of $(-1)$-homogeneous Euler flows stems from a reduction to lower-dimensional geometry. This rigidity result extends to $(-\alpha)$-homogeneous Beltrami fields when $\alpha \leq 2$ \cite{Shv}. In contrast, for $\alpha > 2$, there exist $(-\alpha)$-homogeneous Beltrami fields \cite{Abe11}. 

A natural higher-dimensional analogue of a three-dimensional Beltrami field is a geodesible vector field as we showed their non-existence under $(-\alpha)$-homogenity for $1<\alpha\leq n-1$ in Theorem \ref{t:5.1}. A natural question is whether this interval is sharp.

\begin{q}
For $n \geq 4$, do $(-\alpha)$-homogeneous geodesible fields on $\mathbb{R}^n \setminus \{0\}$ exhibit rigidity when $\alpha < 1$, and do they exist when $\alpha > n-1$?
\end{q}

When $n=3$, $(-\alpha)$-homogeneous Beltrami fields do not exist for all $\alpha<1$ \cite{Shv} and its proof relies on the identification of the vorticity two-form with a vector field. Moreover, $(-\alpha)$-homogeneous Beltrami fields for $\alpha > 2$ \cite{Abe11} are obtained by reducing the problem, via axisymmetry, to a semilinear elliptic equation for a stream function. When $n=2$, geodesible fields are irrotational and $(-\alpha)$-homogeneous geodesible fields exist only for integers $\alpha\in \mathbb{Z}$; see Theorem \ref{t:irrotational}.

\subsection{Rigidity under the pointwise bound}

Finally, we turn to the rigidity of steady solutions to the Euler equations that are not $(-1)$-homogeneous. For the Navier--Stokes equations, 
\begin{equation}
\begin{aligned}
\iota_{\boldsymbol{u}}du+d \Pi&=\Delta u,\\
\textrm{div}\ {\boldsymbol{u}}&=0, 
\end{aligned}
\label{eq: NS}
\end{equation}
the scaling invariance is essentially unique and given by
\[
\boldsymbol{u}(\boldsymbol{x})\mapsto \boldsymbol{u}_\lambda(\boldsymbol{x})=\lambda \boldsymbol{u}(\lambda \boldsymbol{x}).
\]
As a consequence, $(-1)$-homogeneous solutions are the only nontrivial homogeneous solutions. 
In contrast to the Euler case, such $(-1)$-homogeneous solutions exist in low dimensions, 
while they do not exist in higher dimensions \cite{Sverak2011}. More precisely, the situation is as follows.

\medskip

\begin{itemize}
\item For $n=2$, homogeneous solutions are known as the Jeffery--Hamel flows, 
which describe radial outflow or inflow in wedge-shaped domains. 
Their existence reduces to a second-order ordinary differential equation with the flux as a parameter; see, e.g., \cite{Fraenkel62}. 
Moreover, the work \cite[Theorem 2]{Sverak2011} establishes a rigidity result in $\mathbb{R}^2\setminus\{0\}$ under the zero-flux condition. 
We also refer to \cite{GW15} for $(-1)$-homogeneous solutions with spiral streamlines and to \cite{Xie3} for the analysis for two-dimensional self-similar solutions in a sector.\\

\item For $n=3$, any $(-1)$-homogeneous solution 
$\boldsymbol{u}\in C^{\infty}(\mathbb{R}^{3}\setminus\{0\})$ is necessarily given by the Landau solutions, 
which are axisymmetric without swirl \cite{TX98, Sverak2011}. See also \cite{JS26} for a recent work.\\
\item For $n\geq 4$, there are no nontrivial $(-1)$-homogeneous solutions 
$\boldsymbol{u}\in C^{\infty}(\mathbb{R}^{n}\setminus\{0\})$ \cite{Tsai1998}, \cite[Theorem 3]{Sverak2011}. The main difference from the Euler equations \eqref{eq: ER} is that, when the Bernoulli function $\Pi$ vanishes, the vorticity $du$ must also vanish.\\ 
\end{itemize}

In dimension $n=3$, it remains an open problem whether the rigidity of Landau solutions 
continues to hold under the weaker condition
\begin{align}
|\boldsymbol{u}(\boldsymbol{x})| \leq \frac{C}{ |\boldsymbol{x}|},  \label{eq:PU}
\end{align}
for arbitrarily large constants $C$; see the recent survey \cite{LiYan}. 
In contrast, in dimensions $n \ge 4$, it has recently been shown by Bang et al. \cite{Xie25} that rigidity holds under the same conditions. See also \cite{Xie2} for the existence of solutions to the forced Navier--Stokes equations satisfying the pointwise estimate \eqref{eq:PU}.

For the Euler equations, one may also consider whether rigidity for $(-1)$-homogeneous solutions holds under the weaker assumption \eqref{eq:PU}. In two dimensions ($n=2$), there exist radially symmetric solutions that are smooth at the origin and satisfy such a pointwise bound; hence, rigidity does not hold under this condition. Similarly, in three dimensions ($n=3$), there exist smooth Beltrami fields that decay at the rate $|\boldsymbol{x}|^{-1}$ \cite{EP12, EP15}, as well as smooth compactly supported solutions \cite{Gav, CLV}. Therefore, rigidity under the pointwise bound \eqref{eq:PU} fails.

\begin{q}
Let $n \geq 4$. Do steady solutions to the Euler equations on $\mathbb{R}^n \setminus \{0\}$ exhibit rigidity under the pointwise bound \eqref{eq:PU}, or do there exist non-geodesible solutions satisfying this bound?
\end{q}

\appendix 

\section{Weitzenb\"{o}ck identity}\label{App}

The system \eqref{eq: HNS} is obtained by adding the spherical and radial components of \(\Delta \boldsymbol{u}\) to the right-hand side of \eqref{eq: HE}, using the following identity.

\begin{lem}\label{l:L}
Let \(\boldsymbol{u}\) be a divergence-free \(( -1)\)-homogeneous vector field of the form \eqref{eq:HV}. Then
\begin{align}
\rho^{3}\Delta \boldsymbol{u}
=
\Delta_{H}\boldsymbol{v}
+
2\nabla_{S}f
+
(\Delta_{S}f)\boldsymbol{y}.   \label{eq:L}
\end{align}
\end{lem}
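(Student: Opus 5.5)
We will prove \eqref{eq:L} through the identity $-\Delta\boldsymbol{u}=\bigl((d\delta+\delta d)u\bigr)^{\sharp}$ on $\mathbb{R}^n$. Since $\operatorname{div}\boldsymbol{u}=0$ gives $\delta u=0$, this reduces to $-\Delta\boldsymbol{u}=(\delta d u)^{\sharp}$. With $\alpha=1$, formula \eqref{eq:V} gives
\[
du=d_S v+\rho^{-1}d\rho\wedge A,\qquad A=-d_S f .
\]
Divergence-freeness in this case reads $\operatorname{div}_S\boldsymbol{v}=-(n-2)f$, i.e. $\delta_S v=(n-2)f$. The task is therefore to compute the codifferential of a two-form on the warped product $\mathbb{R}^n\setminus\{0\}=(0,\infty)\times\mathbb{S}^{n-1}$ with metric $d\rho^2+\rho^2 g_S$, and then to read off the tangential and radial parts.

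The key step is a formula for $\delta$ on two-forms of the type $\omega=\rho^a\beta+\rho^b d\rho\wedge\gamma$, where $\beta$ and $\gamma$ are forms on $\mathbb{S}^{n-1}$ independent of $\rho$. We would derive it from $\delta=(-1)^{n(k+1)+1}\star d\star$, using the relations
\[
\star_{\mathbb{R}^n}\beta=\pm\rho^{\,n-1-2k}\,d\rho\wedge\star_S\beta,\qquad
\star_{\mathbb{R}^n}(d\rho\wedge\gamma)=\pm\rho^{\,n-1-2(k-1)}\star_S\gamma
\]
for a $k$-form $\beta$ and a $(k-1)$-form $\gamma$ on the sphere. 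Alternatively, one can compute in an orthonormal frame $\{\partial_\rho,\rho^{-1}e_i\}$ via $\delta\omega=-\sum_j\iota_{E_j}\nabla_{E_j}\omega$. Either route yields the following two components:
\begin{itemize}
\item Tangential part: $\rho^{a-2}\delta_S\beta$ together with a term $-\rho^{-(n-1)}\partial_\rho\bigl(\rho^{\,n-3}\rho^{b}\bigr)\gamma$.
\item Radial part: $\rho^{b-2}(\delta_S\gamma)\,d\rho$.
\end{itemize}
The signs must be fixed carefully, and we would check them on the example $\boldsymbol{u}=\rho^{-1}\boldsymbol{e}_\theta$ when $n=2$.

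Applying this with $a=0$, $b=-1$, $\beta=d_Sv$ and $\gamma=A=-d_Sf$ gives
\[
\delta du=\rho^{-2}\bigl(\delta_S d_S v+c_n\,d_S f\bigr)+\rho^{-3}\,\Delta_S f\,d\rho ,
\]
for an explicit constant $c_n$, using $\delta_S d_S f=-\Delta_S f$. To reach the target form we rewrite $\delta_S d_S v=-\Delta_H v-d_S\delta_S v=-\Delta_H v-(n-2)d_S f$. The $f$-gradient terms then combine into a coefficient $c_n+(n-2)$, which should come out to $-2$, reproducing the term $2\nabla_S f$.

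The last step is to convert one-forms back to vectors. A tangential one-form $\rho^{-2}w$ with $w$ on the sphere corresponds to the vector $\rho^{-3}\boldsymbol{w}$, since $\boldsymbol{v}^{\flat}_{\mathbb{R}^n}=\rho v$ and unit tangent vectors scale with $\rho$. The radial component is $\rho^{-3}\Delta_S f\,\boldsymbol{y}$. Multiplying through by $\rho^3$ then gives \eqref{eq:L}.

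We expect the main obstacle to be bookkeeping: the Hodge-star signs and the powers of $\rho$ that fix $c_n$. As a cross-check we would use the scalar case $\Delta(\rho^{-1}f\boldsymbol{y})$ computed directly, where the radial component must be $\rho^{-3}\bigl(\Delta_S f-(n-2)f\cdot 0\bigr)$ after the divergence constraint is used. We would also compare with Šverák's $n=3$ formula.
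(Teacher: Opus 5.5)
Your route is correct and genuinely different from the paper's. The paper stays in vector calculus. It uses the componentwise formula $\Delta\boldsymbol{u}=\rho^{-3}(\Delta_S\boldsymbol{X}+(3-n)\boldsymbol{X})$ for the $\mathbb{R}^n$-valued profile $\boldsymbol{X}=\boldsymbol{v}+f\boldsymbol{y}$. It then converts $\Delta_S(f\boldsymbol{y})$ and $\Delta_S\boldsymbol{v}$ into intrinsic quantities via the Gauss formula for $\mathbb{S}^{n-1}\subset\mathbb{R}^n$ and the Weitzenb\"ock identity $\Delta_C=\Delta_H+(n-2)$. The divergence constraint is used once, to kill the radial term $-2(\operatorname{div}_S\boldsymbol{v}+(n-2)f)\boldsymbol{y}$.

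You instead work intrinsically on the cone $d\rho^2+\rho^2g_S$ and reuse \eqref{eq:V}. $\Delta_H$ then appears directly from $\delta_Sd_S+d_S\delta_S$. You need no extrinsic geometry and no curvature identity on the sphere; the curvature is hidden in the $\rho$-weights of $\delta$. The divergence constraint enters twice, as $\delta u=0$ and as $\delta_Sv=(n-2)f$. The price is exactly the sign and weight bookkeeping you anticipated, and as written it is not right.

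With your conventions ($\delta=-\operatorname{div}$ on one-forms, $\delta_Sd_Sf=-\Delta_Sf$), the correct formula is
\[
\delta\bigl(\rho^{a}\beta+\rho^{b}d\rho\wedge\gamma\bigr)=\rho^{a-2}\delta_S\beta-(n-3+b)\rho^{b-1}\gamma-\rho^{b-2}(\delta_S\gamma)\,d\rho .
\]
This differs from yours in two places:
\begin{itemize}
\item The radial part has the opposite sign to what you wrote.
\item The $\gamma$-term carries $\rho^{b-1}$, not $\rho^{-(n-1)}\partial_\rho(\rho^{n-3+b})\propto\rho^{b-3}$. A dilation check confirms this: $\delta$ lowers the scaling weight by exactly $2$. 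Your later display with $\rho^{-2}$ does have the right power.
\end{itemize}

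For $a=0$, $b=-1$, $\beta=d_Sv$, $\gamma=-d_Sf$, this gives $c_n=n-4$ and
\[
\delta du=\rho^{-2}\bigl(\delta_Sd_Sv+(n-4)d_Sf\bigr)-\rho^{-3}(\Delta_Sf)\,d\rho=-\rho^{-2}\bigl(\Delta_Hv+2d_Sf\bigr)-\rho^{-3}(\Delta_Sf)\,d\rho .
\]
So the relevant combination is $c_n-(n-2)=-2$, not $c_n+(n-2)$, and $\Delta\boldsymbol{u}=-(\delta du)^{\sharp}$ then yields \eqref{eq:L}. In your draft the wrong radial sign is compensated by silently dropping the minus sign in $\Delta\boldsymbol{u}=-(\delta du)^{\sharp}$. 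The radial term therefore comes out right for the wrong reason.

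Your proposed test $\boldsymbol{u}=\rho^{-1}\boldsymbol{e}_\theta$ in $n=2$ is vacuous: $du=0$, so both sides vanish and no sign is tested. A useful test is $\boldsymbol{u}=\rho^{-1}(c\,\boldsymbol{e}_\theta+f(\theta)\boldsymbol{e}_\rho)$. A Cartesian computation gives $\rho^3\Delta\boldsymbol{u}=2f'\boldsymbol{e}_\theta+f''\boldsymbol{e}_\rho$, which checks both the radial sign and $c_2=-2$.
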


Using a local orthonormal frame $\{\bold{e}_i\}_{i=1}^{n-1}$ on $\mathbb{S}^{n-1}$, we express the the Laplace-Beltrami operator and the surface divergence on $\mathbb{S}^{n-1}$ as 
\begin{align*}
\Delta_{S}\boldsymbol{v}=\sum_{i=1}^{n-1}D_{\bold{e}_i}D_{\bold{e}_i}\boldsymbol{v}, 
\qquad 
\mathrm{div}_{S}\boldsymbol{v}=\sum_{i=1}^{n-1}\langle \nabla_{\bold{e}_i}\boldsymbol{v},\bold{e}_i\rangle,
\end{align*}
for tangential vector fields $\boldsymbol{v}$ on $\mathbb{S}^{n-1}$, where $D_{\bold{e}_i}\boldsymbol{v}=\bold{e}_i\cdot\nabla \boldsymbol{v}$ denotes the directional derivative along $\bold{e}_i$, and $\langle\cdot,\cdot\rangle$ is the standard inner product on $\mathbb{S}^{n-1}$. By choosing normal coordinates at a point $\boldsymbol{y}\in \mathbb{S}^{n-1}$, i.e., $\nabla_{\bold{e}_i}\bold{e}_i=0$ at $\boldsymbol{y}$, we express the covariant Laplacian on $\mathbb{S}^{n-1}$ as 
\begin{align*}
\Delta_{C}\boldsymbol{v}=\sum_{i=1}^{n-1}\nabla_{\bold{e}_{i}}\nabla_{\bold{e}_{i}}\boldsymbol{v},
\end{align*}
where $\nabla_{\bold{e}_{i}}$ is a covariant derivative along with $\bold{e}_{i}$.

\begin{prop}
The identity 
\begin{align}
\Delta_S \boldsymbol{v}=\Delta_{C}\boldsymbol{v}-\boldsymbol{v}-2(\mathrm{div}_{S}\boldsymbol{v})\,\boldsymbol{y}, \label{eq: A}
\end{align}
holds for tangential vector fields $\boldsymbol{v}$ on $\mathbb{S}^{n-1}$.
\end{prop}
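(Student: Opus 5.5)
We verify \eqref{eq: A} pointwise. Fix $\boldsymbol{y}\in\mathbb{S}^{n-1}$ and take a local orthonormal frame $\{\bold{e}_i\}$ with $\nabla_{\bold{e}_i}\bold{e}_j=0$ at $\boldsymbol{y}$. The tool is the Gauss formula for the unit sphere in $\mathbb{R}^n$. For tangential fields $X,Y$ it reads
\begin{align*}
D_X Y=\nabla_X Y-\langle X,Y\rangle\,\boldsymbol{y},
\end{align*}
since the second fundamental form of $\mathbb{S}^{n-1}$ with respect to the outward normal $\boldsymbol{y}$ is $\langle X,Y\rangle$, and $D_X\boldsymbol{y}=X$. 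I will apply $D_{\bold{e}_i}$ twice to $\boldsymbol{v}$, keep careful track of the normal components, and then sum over $i$.

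First application: $D_{\bold{e}_i}\boldsymbol{v}=\nabla_{\bold{e}_i}\boldsymbol{v}-\langle \bold{e}_i,\boldsymbol{v}\rangle\boldsymbol{y}$.

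Second application, term by term:
\begin{itemize}
\item The tangential field $\nabla_{\bold{e}_i}\boldsymbol{v}$ gives $D_{\bold{e}_i}\nabla_{\bold{e}_i}\boldsymbol{v}=\nabla_{\bold{e}_i}\nabla_{\bold{e}_i}\boldsymbol{v}-\langle \bold{e}_i,\nabla_{\bold{e}_i}\boldsymbol{v}\rangle\boldsymbol{y}$.
\item The normal term gives $-D_{\bold{e}_i}\bigl(\langle \bold{e}_i,\boldsymbol{v}\rangle\boldsymbol{y}\bigr)=-\bigl(\bold{e}_i\langle \bold{e}_i,\boldsymbol{v}\rangle\bigr)\boldsymbol{y}-\langle \bold{e}_i,\boldsymbol{v}\rangle\bold{e}_i$.
\end{itemize}
At $\boldsymbol{y}$ we have $\nabla_{\bold{e}_i}\bold{e}_i=0$, so metric compatibility gives $\bold{e}_i\langle \bold{e}_i,\boldsymbol{v}\rangle=\langle \bold{e}_i,\nabla_{\bold{e}_i}\boldsymbol{v}\rangle$.

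Summing over $i=1,\dots,n-1$:
\begin{itemize}
\item the first pieces give $\Delta_C\boldsymbol{v}$;
\item $\sum_i\langle \bold{e}_i,\boldsymbol{v}\rangle\bold{e}_i=\boldsymbol{v}$, since $\boldsymbol{v}$ is tangential;
\item the two normal contributions are each $-\sum_i\langle\nabla_{\bold{e}_i}\boldsymbol{v},\bold{e}_i\rangle\boldsymbol{y}=-(\mathrm{div}_S\boldsymbol{v})\boldsymbol{y}$, giving $-2(\mathrm{div}_S\boldsymbol{v})\boldsymbol{y}$ in total.
\end{itemize}
This is exactly \eqref{eq: A}.

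Both $\Delta_S$ (defined via ambient directional derivatives in an orthonormal frame) and $\Delta_C$ are frame-independent. So computing them in a frame adapted at $\boldsymbol{y}$ is legitimate.

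The main obstacle is bookkeeping rather than any real difficulty. I need to make sure the derivative $D_{\bold{e}_i}\bold{e}_i$ in the ambient space is handled consistently; its tangential part vanishes at $\boldsymbol{y}$ and its normal part is $-\boldsymbol{y}$. Whether this contributes depends on whether $\Delta_S$ is read as $\sum_i D_{\bold{e}_i}D_{\bold{e}_i}$ or as $\sum_i\bigl(D_{\bold{e}_i}D_{\bold{e}_i}-D_{D_{\bold{e}_i}\bold{e}_i}\bigr)$. The latter is the radial-derivative term in the ambient Laplacian, and I expect it is absent here because the paper's definition is the former. I would check the final identity against a test case, $\boldsymbol{v}=$ the Hopf field on $\mathbb{S}^3$ (linear in $\boldsymbol{x}$), to confirm the signs and the coefficient $-\boldsymbol{v}$.
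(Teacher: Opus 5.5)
Your proof is correct and is essentially the paper's own argument. You differentiate the Gauss formula $D_{\boldsymbol{e}_i}\boldsymbol{v}=\nabla_{\boldsymbol{e}_i}\boldsymbol{v}-\langle\boldsymbol{v},\boldsymbol{e}_i\rangle\boldsymbol{y}$ once more, use $D_{\boldsymbol{e}_i}\boldsymbol{y}=\boldsymbol{e}_i$ and $\boldsymbol{e}_i\langle\boldsymbol{v},\boldsymbol{e}_i\rangle=\langle\nabla_{\boldsymbol{e}_i}\boldsymbol{v},\boldsymbol{e}_i\rangle$ at the centre of a normal frame, and sum over $i$.

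Your closing hedge resolves in favour of the first reading. The componentwise Laplace--Beltrami operator is $\sum_i\bigl(D_{\boldsymbol{e}_i}D_{\boldsymbol{e}_i}-D_{\nabla_{\boldsymbol{e}_i}\boldsymbol{e}_i}\bigr)$ with the intrinsic $\nabla_{\boldsymbol{e}_i}\boldsymbol{e}_i$, which vanishes at $\boldsymbol{y}$, so the normal part $-\boldsymbol{y}$ of $D_{\boldsymbol{e}_i}\boldsymbol{e}_i$ does not enter $\Delta_S$. This also means only the normal-frame expression $\sum_i D_{\boldsymbol{e}_i}D_{\boldsymbol{e}_i}$ is frame-independent, not that sum in an arbitrary orthonormal frame as you stated. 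The Hopf-field test you propose does confirm the signs: $-(n-1)\boldsymbol{v}=-(n-2)\boldsymbol{v}-\boldsymbol{v}$.
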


\begin{proof}
Differentiating the Gauss formula
\[
D_{\bold{e}_i}\boldsymbol{v}=\nabla_{\bold{e}_i}\boldsymbol{v}-\langle \boldsymbol{v},\bold{e}_i\rangle \boldsymbol{y},
\]
we find that
\begin{align*}
D_{\bold{e}_i}D_{\bold{e}_i}\boldsymbol{v}
&=\nabla_{\bold{e}_{i}}\nabla_{\bold{e}_{i}}\boldsymbol{v}
-\langle \nabla_{\bold{e}_i}\boldsymbol{v},\bold{e}_i\rangle \boldsymbol{y}
- D_{\bold{e}_i}\bigl(\langle \boldsymbol{v},\bold{e}_i\rangle\bigr)\boldsymbol{y}
-\langle \boldsymbol{v},\bold{e}_i\rangle D_{\bold{e}_i}\boldsymbol{y}.
\end{align*}
Using $D_{\bold{e}_i}\boldsymbol{y}=\bold{e}_i$ and
\begin{align*}
D_{\bold{e}_i}\bigl(\langle \boldsymbol{v},\bold{e}_i\rangle\bigr)
=\langle D_{\bold{e}_i}\boldsymbol{v},\bold{e}_i\rangle
+\langle \boldsymbol{v},D_{\bold{e}_i}\bold{e}_i\rangle
=\langle \nabla_{\bold{e}_i}\boldsymbol{v},\bold{e}_i\rangle,
\end{align*}
we conclude, after summing over $i$, that \eqref{eq: A} holds.
\end{proof}

\begin{prop}
The identities 
\begin{align}
\Delta_{S}(f\boldsymbol{y})&=(\Delta_{S}f )\boldsymbol{y}+2\nabla_S f-(n-1)f\boldsymbol{y}, \label{eq: F1}\\
\Delta_{S}\boldsymbol{v}&=\Delta_{H}\boldsymbol{v}+(n-3)\boldsymbol{v}-2(\textrm{div}_{S}\ \boldsymbol{v})\boldsymbol{y},  \label{eq: F2}
\end{align}
hold for scalar functions $f$ and tangential vector field $\boldsymbol{v}$ on $\mathbb{S}^{n-1}$.
\end{prop}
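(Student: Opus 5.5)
To prove \eqref{eq: F1} and \eqref{eq: F2}, I would use \eqref{eq: A} together with the Gauss formula $D_{\bold{e}_i}\boldsymbol{w}=\nabla_{\bold{e}_i}\boldsymbol{w}-\langle \boldsymbol{w},\bold{e}_i\rangle\boldsymbol{y}$ and $D_{\bold{e}_i}\boldsymbol{y}=\bold{e}_i$. Throughout I work in a local orthonormal frame normal at the chosen point $\boldsymbol{y}$, so that $\nabla_{\bold{e}_i}\bold{e}_i=0$ there.

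For \eqref{eq: F1}, I compute componentwise with the ambient directional derivative. First,
\[
D_{\bold{e}_i}(f\boldsymbol{y})=(\bold{e}_i f)\boldsymbol{y}+f\bold{e}_i .
\]
Applying $D_{\bold{e}_i}$ once more gives
\[
(\bold{e}_i\bold{e}_i f)\boldsymbol{y}+2(\bold{e}_i f)\bold{e}_i+fD_{\bold{e}_i}\bold{e}_i .
\]
At the center of the normal frame, the Gauss formula gives $D_{\bold{e}_i}\bold{e}_i=-\boldsymbol{y}$. Summing over $i=1,\dots,n-1$ then yields
\[
(\Delta_S f)\boldsymbol{y}+2\nabla_S f-(n-1)f\boldsymbol{y},
\]
using $\sum_i\bold{e}_i\bold{e}_i f=\Delta_S f$ at that point. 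This is \eqref{eq: F1}.

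For \eqref{eq: F2}, by \eqref{eq: A} it suffices to prove the Weitzenb\"ock identity
\[
\Delta_C\boldsymbol{v}=\Delta_H\boldsymbol{v}+(n-2)\boldsymbol{v}.
\]
Then $\Delta_S\boldsymbol{v}=\Delta_H\boldsymbol{v}+(n-2)\boldsymbol{v}-\boldsymbol{v}-2(\mathrm{div}_S\boldsymbol{v})\boldsymbol{y}$, which is \eqref{eq: F2}. I would start from the Bochner–Weitzenb\"ock formula for one-forms,
\[
-(d\delta+\delta d)v=\Delta_C v-\mathrm{Ric}(v).
\]
Equivalently, I can prove it directly: write $d v(\bold{e}_i,\bold{e}_j)=\langle\nabla_{\bold{e}_i}\boldsymbol{v},\bold{e}_j\rangle-\langle\nabla_{\bold{e}_j}\boldsymbol{v},\bold{e}_i\rangle$ and $\delta v=-\mathrm{div}_S\boldsymbol{v}$. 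Expanding $-(d\delta+\delta d)v$ in the normal frame gives
\[
\sum_i\nabla_{\bold{e}_i}\nabla_{\bold{e}_i}\boldsymbol{v}-\sum_i\bigl(\nabla_{\bold{e}_i}\nabla_{\bold{e}_j}-\nabla_{\bold{e}_j}\nabla_{\bold{e}_i}\bigr)\boldsymbol{v}\text{-terms},
\]
and the commutator terms produce the Ricci contraction $\sum_i R(\bold{e}_i,\bold{e}_j)\boldsymbol{v}$.

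On the unit sphere $\mathbb{S}^{n-1}$ the sectional curvature is $1$, so $\mathrm{Ric}=(n-2)g$. This gives $\Delta_H\boldsymbol{v}=\Delta_C\boldsymbol{v}-(n-2)\boldsymbol{v}$, as required.

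The main obstacle is keeping the sign conventions consistent. The paper defines $-\Delta_H=(d\delta+\delta d)$, with $\delta$ the formal adjoint. The curvature sign must come out as $+\mathrm{Ric}$ on the side of $d\delta+\delta d$, i.e. the Hodge Laplacian (as a positive operator) equals the rough Laplacian plus Ricci. I would check this on a simple test field, such as a Killing field on $\mathbb{S}^2$ (e.g. rotation $\boldsymbol{v}=-y^2\partial_1+y^1\partial_2$). For a Killing field $\Delta_C\boldsymbol{v}=-\mathrm{Ric}(\boldsymbol{v})$ and $\delta v=0$. Taking $v$ coexact with $dv$ a multiple of the volume form gives $\Delta_H v=-2(n-2)v$, which confirms that the coefficient $(n-2)$ enters with the stated sign.
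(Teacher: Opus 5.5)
Your proof is correct and follows the paper's route. The first identity is a direct ambient computation: your normal-frame calculation using $D_{\boldsymbol{e}_i}\boldsymbol{e}_i=-\boldsymbol{y}$ is just the frame version of the paper's computation in $\boldsymbol{x}$-coordinates. The second identity is obtained, as in the paper, by combining \eqref{eq: A} with the Weitzenb\"ock identity $\Delta_C=\Delta_H+(n-2)$, which comes from $\mathrm{Ric}=(n-2)g$ on $\mathbb{S}^{n-1}$. Your sign check is consistent with the paper's convention $-\Delta_H=d\delta+\delta d$.
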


\begin{proof}
The identity \eqref{eq: F1} follows from a computation using the $\boldsymbol{x}$-coordinates. 
The identity \eqref{eq: F2} follows from \eqref{eq: A} and Weitzenb\"ock identity for the Hodge Laplacian  $\Delta_{C}=\Delta_{H}+n-2$. 
\end{proof}

\begin{proof}[Proof of Lemma \ref{l:L}]
Using the polar coordinates, we find that 
\begin{align*}
\Delta \boldsymbol{u}=\rho^{-3}(\Delta_{S}\boldsymbol{X}+(3-n)\boldsymbol{X}),
\end{align*}
where $\boldsymbol{X}(\boldsymbol{y})=\boldsymbol{v}(\boldsymbol{y})+f(\boldsymbol{y})\boldsymbol{y}$. Applying the identities \eqref{eq: F1} and \eqref{eq: F2} and using the last equation in \eqref{eq: HE}, we obtain \eqref{eq:L}. 
\end{proof}

\bibliographystyle{abbrv}
\bibliography{ref}

\begin{thebibliography}{10}

\bibitem{Abe11}
K.~Abe.
\newblock {E}xistence of homogeneous {E}uler flows of degree $-\alpha\notin [-2,0]$.
\newblock {\em Arch. Rational Mech. Anal.}, 248(30), (2024).

\bibitem{AGJ}
K.~Abe, D.~Ginsberg, and I.-J. Jeong.
\newblock {S}tationary self-similar profiles for the two-dimensional inviscid {B}oussinesq equations.
\newblock {\em Arch. Rational Mech. Anal.}, 250(41), (2026).

\bibitem{AGSJ}
K.~Abe, J.~Gomez-Serrano, and I.-J. Jeong.
\newblock Homogeneous steady states for the generalized surface quasi-geostrophic equations.
\newblock \href{https://arxiv.org/abs/2510.03009}{arXiv:2510.03009}.

\bibitem{AK21}
V.~I. Arnold and B.~A. Khesin.
\newblock {\em Topological methods in hydrodynamics}.
\newblock Springer, Cham, 2021.

\bibitem{AH12}
K.~Atkinson and W.~Han.
\newblock {\em Spherical Harmonics and Approximations on the Unit Sphere: An Introduction}, volume 2044 of {\em Lecture Notes in Mathematics}.
\newblock Springer, Heidelberg, 2012.

\bibitem{Xie2}
J.~Bang, C.~Gui, H.~Liu, Y.~Wang, and C.~Xie.
\newblock {O}n the existence of self-similar solutions to the steady {N}avier--{S}tokes equations in high dimensions.
\newblock \href{https://arxiv.org/abs/2510.10488}{arXiv:2510.10488}.

\bibitem{Xie3}
J.~Bang, C.~Gui, H.~Liu, Y.~Wang, and C.~Xie.
\newblock Self-similar solutions to the steady {N}avier-{S}tokes equations in a two-dimensional sector.
\newblock \href{https://arxiv.org/abs/2412.07283}{arXiv:2412.07283}.

\bibitem{Xie25}
J.~Bang, C.~Gui, H.~Liu, Y.~Wang, and C.~Xie.
\newblock Rigidity of steady solutions to the {Navier--Stokes} equations in high dimensions and its applications.
\newblock {\em J. Eur. Math. Soc. (JEMS)}, (2025).
\newblock Published online first.

\bibitem{C21}
R.~Cardona.
\newblock {S}teady {E}uler flows and {B}eltrami fields in high dimensions.
\newblock {\em Ergodic Theory and Dynamical Systems}, 41(12):3610--3633, (2021).

\bibitem{Car23}
R.~Cardona, E.~Miranda, D.~Peralta-Salas, and F.~Presas.
\newblock Universality of {E}uler flows and flexibility of {R}eeb embeddings.
\newblock {\em Advances in Mathematics}, 428:109142, (2023).

\bibitem{CC15}
D.~Chae and P.~Constantin.
\newblock Remarks on a {L}iouville-type theorem for {B}eltrami flows.
\newblock {\em Int. Math. Res. Not. IMRN}, pages 10012--10016, (2015).

\bibitem{CDG21b}
P.~Constantin, T.~D. Drivas, and D.~Ginsberg.
\newblock Flexibility and rigidity in steady fluid motion.
\newblock {\em Commun. Math. Phys.}, 385:521--563, (2021).

\bibitem{CLV}
P.~Constantin, J.~La, and V.~Vicol.
\newblock Remarks on a paper by {G}avrilov: {G}rad-{S}hafranov equations, steady solutions of the three dimensional incompressible {E}uler equations with compactly supported velocities, and applications.
\newblock {\em Geom. Funct. Anal.}, 29:1773--1793, (2019).

\bibitem{EP12}
A.~Enciso and D.~Peralta-Salas.
\newblock Knots and links in steady solutions of the {E}uler equation.
\newblock {\em Ann. of Math. (2)}, 175:345--367, (2012).

\bibitem{EP15}
A.~Enciso and D.~Peralta-Salas.
\newblock Existence of knotted vortex tubes in steady {E}uler flows.
\newblock {\em Acta Math.}, 214:61--134, (2015).

\bibitem{EP16}
A.~Enciso and D.~Peralta-Salas.
\newblock Beltrami fields with a nonconstant proportionality factor are rare.
\newblock {\em Arch. Ration. Mech. Anal.}, 220:243--260, (2016).

\bibitem{EPS17}
A.~Enciso, D.~Peralta-Salas, and F.~Torres~de Lizaur.
\newblock Knotted structures in high-energy {B}eltrami fields on the torus and the sphere.
\newblock {\em Ann. Sci. \'{E}c. Norm. Sup\'{e}r. (4)}, 50(4):995--1016, (2017).

\bibitem{Fo89}
G.~B. Folland.
\newblock Harmonic analysis of the de rham complex on the sphere.
\newblock {\em Journal f{\"u}r die reine und angewandte Mathematik}, 398:130--143, (1989).

\bibitem{Fraenkel62}
L.~E. Fraenkel.
\newblock Laminar flow in symmetrical channels with slightly curved walls. {I}. {O}n the {J}effery-{H}amel solutions for flow between plane walls.
\newblock {\em Proc. Roy. Soc. London Ser. A}, 267:119--138, (1962).

\bibitem{Gav}
A.~V. Gavrilov.
\newblock A steady {E}uler flow with compact support.
\newblock {\em Geom. Funct. Anal.}, 29:190--197, (2019).

\bibitem{Ghrist}
R.~Ghrist.
\newblock Steady nonintegrable high-dimensional fluids.
\newblock {\em Lett. Math. Phys.}, 55(no. 3):pp. 193--204, (2001).

\bibitem{GK94}
V.~L. Ginzburg and B.~A. Khesin.
\newblock Steady fluid flows and symplectic geometry.
\newblock {\em J. Geometry and Physics}, 14(no. 2):195--210, (1994).

\bibitem{Gluck}
H.~Gluck.
\newblock Open letter on geodesible flows.
\newblock Unpublished manuscript (cited in Sullivan (1978)), 1970s.

\bibitem{GPMPS25}
{\'A}.~Gonz{\'a}lez-Prieto, E.~Miranda, and D.~Peralta-Salas.
\newblock Universality in computable dynamical systems: old and new.
\newblock {\em Journal of Physics: Complexity}, 6:035014, (2025).

\bibitem{GW15}
J.~Guillod and P.~Wittwer.
\newblock Generalized scale-invariant solutions to the two-dimensional stationary {N}avier-{S}tokes equations.
\newblock {\em SIAM J. Math. Anal.}, 47(1):955--968, (2015).

\bibitem{JS26}
H.~Jia and V.~Sverak.
\newblock {R}efined asymptotics of the steady {N}avier {S}tokes equation around small {L}andau solutions.
\newblock \href{https://arxiv.org/abs/2605.24200}{arXiv:2605.24200}.

\bibitem{KKPS14}
B.~Khesin, S.~Kuksin, and D.~Peralta-Salas.
\newblock {K}{A}{M} theory and the 3{D} {E}uler equation.
\newblock {\em Advances in Mathematics}, 267:498--522, (2014).

\bibitem{KKPS20}
B.~Khesin, S.~Kuksin, and D.~Peralta-Salas.
\newblock Global, local and dense non-mixing of the {3D} {E}uler equation.
\newblock {\em Arch. Rational Mech. Anal.}, 238(2):1087--1112, (2020).

\bibitem{LiYan}
L.~Li and X.~Yan.
\newblock Recent research on $(-1)$-homogeneous solutions of stationary {N}avier--{S}tokes equations.
\newblock \href{https://arxiv.org/abs/2509.07243}{arXiv:2509.07243}.

\bibitem{LuoShvydkoy}
X.~Luo and R.~Shvydkoy.
\newblock 2{D} homogeneous solutions to the {E}uler equation.
\newblock {\em Comm. Partial Differential Equations}, 40:1666--1687, (2015).

\bibitem{LS17}
X.~Luo and R.~Shvydkoy.
\newblock Addendum: 2{D} homogeneous solutions to the {E}uler equation.
\newblock {\em Comm. Partial Differential Equations}, 42(3):491--493, (2017).

\bibitem{Na14}
N.~Nadirashvili.
\newblock Liouville theorem for {B}eltrami flow.
\newblock {\em Geom. Funct. Anal.}, 24:916--921, (2014).

\bibitem{PC}
M.~M.~G. Pascual-Caballo.
\newblock Stationary radial homogeneous solutions for the inviscid {SQG} equation.
\newblock \href{https://arxiv.org/abs/2510.03108}{arXiv:2510.03108}.

\bibitem{PSRT21}
D.~Peralta-Salas, A.~Rechtman, and F.~Torres~de Lizaur.
\newblock A characterization of 3d steady euler flows using commuting zero-flux homologies.
\newblock {\em Ergodic Theory and Dynamical Systems}, 41(7):2166--2181, (2021).

\bibitem{Shv}
R.~Shvydkoy.
\newblock Homogeneous solutions to the 3{D} {E}uler system.
\newblock {\em Trans. Amer. Math. Soc.}, 370:2517--2535, (2018).

\bibitem{Sl20}
R.~Slobodeanu.
\newblock Steady {E}uler flows on the 3-sphere and other {S}asakian 3-manifolds.
\newblock {\em Ann. Global Anal. Geom.}, 58(4):561--575, (2020).

\bibitem{Sohr}
H.~Sohr.
\newblock {\em The {N}avier-{S}tokes equations}.
\newblock Birkh\"auser Advanced Texts: Basler Lehrb\"ucher. Birkh\"auser Verlag, Basel, 2001.

\bibitem{Tao18}
T.~Tao.
\newblock On the universality of the incompressible euler equation on compact manifolds.
\newblock {\em Discrete Contin. Dyn. Syst.}, 38(3):1553--1565, (2018).

\bibitem{Tao20}
T.~Tao.
\newblock On the universality of the incompressible {E}uler equation on compact manifolds, {II}. {N}on-rigidity of {E}uler flows.
\newblock {\em Pure Appl. Funct. Anal.}, 5(6):1425--1443, (2020).

\bibitem{TX98}
G.~Tian and Z.~Xin.
\newblock One-point singular solutions to the {N}avier-{S}tokes equations.
\newblock {\em Topol. Methods Nonlinear Anal.}, 11:135--145, (1998).

\bibitem{Tsai1998}
T.-P. Tsai.
\newblock {\em On Problems Arising in the Regularity Theory for the Navier--Stokes Equations}.
\newblock PhD thesis, University of Minnesota, 1998.
\newblock MR2697733.

\bibitem{Sverak2011}
V.~\v{S}ver\'{a}k.
\newblock On {L}andau's solutions of the {N}avier-{S}tokes equations.
\newblock {\em J. Math. Sci. (N.Y.)}, 179:208--228, (2011).

\end{thebibliography}

\end{document}